\newcommand{\rr}[1]{\mathbb R^{#1}}
\newcommand{\nm}[2]{\Vert #1\Vert _{#2}}
\newcommand{\cdo}{\, \cdot \, }
\newcommand{\maclS}{\mathcal S}
\newcommand{\mascF}{\mathscr F}
\newcommand{\mascP}{\mathscr P}
\newcommand{\mascS}{\mathscr S}
\numberwithin{equation}{section}
\newtheorem{thm}{Theorem}
\numberwithin{thm}{section}
\newcommand{\rubrik}{}
\newtheorem{prop}[thm]{Proposition}
\newtheorem{lemma}[thm]{Lemma}
\theoremstyle{definition}
\newtheorem{defn}[thm]{Definition}
\theoremstyle{remark}
\newtheorem{rem}[thm]{Remark}
\author{Nenad Teofanov}
\address{Department of Mathematics and Informatics,
	University of Novi Sad, Serbia}
\email{nenad.teofanov@dmi.uns.ac.rs}
\title{\textbf{Wilson bases and ultradistributions}}
\begin{document}
	
\subjclass[2020]{42C15,  46F05, 41A58}

\keywords{Wilson bases, coorbit spaces, short-time Fouerier transform,
 modulation spaces,  Gelfand-Shilov spaces}

\begin{abstract}
We give a characterization of Gelfand-Shilov type spa\-ces of test functions and their dual spaces of tempered
ultradistributions
by the means of  Wilson bases of exponential decay. We offer two different proofs, and
extend known results to the Roumieu case.
\end{abstract}

\maketitle

\section{Introduction}\label{sec0}

Wilson bases were constructed by I.\ Daubechies, S.\ Jaffard, and J.\
Journ\'e in \cite{DJJ} to overcome constraints arising from the Balian-Low theorem,
and soonafter shown to be unconditional bases for modulation spaces, see \cite{FGW}.
By combining the Wilson bases and tools from time-frequency analysis, approximate diagonalization of
different classes of pseudodifferential operators is obtained in \cite{T1, PT2}.
Wilson bases of Meyer type were used in the study of gravitational waves,
cf. \cite{CJM, NKM}. We refer to \cite{BJLO} for  recent construction of orthonormal Wilson bases in multidimensional case
which overcomes a deficiency of the tensor product construction  used in e.g. \cite{T1,T2}.

\par

Gelfand--Shilov spaces were initially introduced for the analysis  of solutions of certain parabolic initial-value problems  \cite{GS}, and thereafter applied in different contexts when precise estimates of global decay and regularity are needed,
see \cite{Gramchev} for an overview. Recently, Hermite expansions of Fourier transform invariant Gelfand--Shilov spaces, and more generally Pilipovi\'c spaces, are considered in  \cite{Toft18}, see also \cite{P1, langen}. 

\par

In this paper we give a description of Gelfand--Shilov spaces and their dual spaces of tempered ultradistirbutions in terms of Wilson bases. This extends some results from \cite{PT1} given for Beurling type Gelfand--Shilov spaces.
Both Wilson bases and Hermite functions are  orthonormal bases for $ L^2 (\mathbb{R}^d) $
consisting of functions which are well localized in phase--space (time--frequency plane).
From such perspective our results are expected. However, due to the specific structure of Wilson bases, the proofs are based on entirely different arguments then those related to the Hermite basis which utilize recursive relation between Hermite functions and the fact that they are eigenfunctions of the harmonic oscillator.

\par

Instead, we apply the powerful general theory of coorbit spaces,  \cite{FG1, FG2}. The key auxiliary result is  the fact that Wilson bases are unconditional bases for coorbit spaces, \cite{FGW}. We modify and simplify the approach from
\cite{PT1} related to Beurling case, and provide detailed proofs since the more involved Roumieu case contains
nontrivial modifications of arguments given there.
As a consequence of our results, we recover the well known relation between Gelfand--Shilov spaces and modulation spaces.
Furthermore, if that relation is taken as granted, we give an alternative proof of our main results without an explicit reference to coorbit spaces.

\par

Since both proofs are essentially  based on the exponential decay of elements of Wilson bases and asymptotic behavior of the STFT,
the techniques from the present paper can be modified to include other time-frequency representations and
also more general (for example anisotropic)  spaces of test functions and their distribution spaces.
Such investigations are out of scope of the present paper and will be the subject of our future work.

We end the introduction by recalling basic notation which will be used in the sequel.

\subsection*{Notation}
Operators of translations and modulations of a given function $f$ are respectively given by
$ T_x f(\cdot) = f(\cdot - x)$ and $ M_y f(\cdot) = e^{2\pi i y \cdot} f(\cdot)$, $x,y \in \mathbb{R}^d$.
The notation $A\hookrightarrow B$ means that the topological
spaces $A$ and $B$ satisfy $A\subseteq B$ with continuous embeddings.
We write $A(\theta )\lesssim B(\theta )$, $\theta \in \Omega$,
if there is a constant $c>0$ such that $A(\theta )\le cB(\theta )$
for all $\theta \in \Omega$.

\par

The scalar product in $L^2 (\mathbb{R}^d)$ is given by
$$
\langle f, g \rangle = \int _{\mathbb{R}^d} f(x) \overline{g(x)} dx,
$$
and $ \| \cdot \|^2 = \langle \cdot, \cdot \rangle$.

The Fourier transform of an integrable function $ f $ is given by
$$
\hat f (\xi) = {\mathscr F} f (\xi) = \int_{\mathbb{R}^d} e^{-2\pi i \xi x} f(x) dx,  \; \xi \in
\mathbb{R}^d.
$$
It extends uniquely
to a unitary operator on $L^2(\rr d)$.

Let $\phi \in L^2  (\rr d)$ be fixed.
Then the short-time
Fourier transform (STFT) $V_\phi f$ of $f\in L^2(\rr d)$
with respect to the window function $\phi$ is
defined by
\begin{multline}    \label{stft01}
V_\phi f(x,\xi)  =
\mascF (f \, \overline {\phi (\cdo -x)})(\xi )
= (f \cdot T_x \overline{\phi})\hat{} \;  (\xi) \\
 = \int _{\rr d} f(y)\overline {\phi
(y-x)}e^{-2\pi i \xi y}\, dy =
\langle f, M_\xi T_x \phi \rangle, \quad x,\xi \in \rr d.
\end{multline}
Let $f_1,f_2, \phi_1, \phi_2 \in L^2 (\mathbb{R}^d)$. Then $V_{\phi_j } f_j \in L^2 (\mathbb{R}^{2d})$, $j=1,2$, and
it satisfies the following orthogonality relation (\cite[Theorem 3.2.1]{Grbook}):
\begin{equation}
\langle V_{\phi_1}f_1 , V_{\phi_2}f_2 \rangle = \langle \phi_2,\phi _1\rangle  \langle f_1,f_2 \rangle,
\label{orthrel}
\end{equation}
whence $ \| V_{\phi}f \| =  \| \phi \| \cdot \| f\| $. The following fundamental identity of time-frequency analysis
(\cite{Cordero1,Grbook}) is often used:
\begin{equation}
V_\phi f(x, \xi) = e^{-2 \pi i x \cdot \xi} V_{\widehat{\phi}} \widehat{f} (\xi, -x), \;\;\; x,\xi \in \rr d.
\label{fundident}
\end{equation}

By $  \Sigma_1 (\mathbb{R}^d ) $ we denote the Gelfand-Shilov space of smooth functions
given by:
\begin{equation}
f \in  \Sigma_1 (\mathbb{R}^d) \Leftrightarrow
\| f(x)  e^{h\cdot |x|}\|_{L^\infty} < \infty \;
\; \text{and} \;\;
\| \hat f (\omega)  e^{h\cdot |\omega|}\|_{L^\infty}< \infty, \;\; \forall  h > 0,
\label{eqGelfandShilovspace}
\end{equation}
and its dual space is denoted by $ \Sigma_1 ^{\prime} (\mathbb{R}^d ). $

If $ \phi \in \Sigma_1 (\mathbb{R}^d )$, then $\overline {M_\xi T_x \phi} \in  \Sigma_1 (\mathbb{R}^d ), $ so by  \eqref{stft01} it follows that the STFT can be extended to
$ \Sigma_1 ^{\prime} (\mathbb{R}^d ), $ and restricted to $ \Sigma_1 (\mathbb{R}^d )$.

\section{Preliminaries}\label{sec1}

\par

In this section we recall the Wilson  bases, weight functions, coorbit spaces and Gelfand-Shilov type spaces. We also prove some auxiliary results (Lemmas \ref{lema1} and \ref{lema2} and Theorem \ref{esemes}) which will be used in Sections \ref{sec2} and \ref{sec:modulation}.

\subsection{Wilson bases}

Following the idea of K. Wilson \cite{Wilson}, Daubechies, Jaffard and Journe constructed a
 real-valued function $\psi $ such that
\begin{equation}    \label{opadanjepsi}
|\psi (x)| \leq C e^{-a |x|}, \;\;\; |\hat\psi (\xi)| \leq C
e^{-b |\xi|} \;\;\; x,\xi \in \mathbb{R},
\end{equation}
for some constants $ a,b,C > 0 $, and obtain an orthonormal basis (ONB)
$ \{\psi_{l,n} \}_{l \in \mathbb{N}_{0}, n \in
\mathbb{Z}}, $   of $L^2 (\mathbb{R})$, where
\begin{eqnarray}
 \psi_{0,n}  (x) = & T_n \psi (x), & \nonumber \\
 \psi_{l,n} (x)  = & \sqrt 2 \Re (M_l T_{n/2} \psi (x)), &
                      l+n \in 2\mathbb{Z}, l\neq 0,  \label{wilson01} \\
 \psi_{l,n}  (x) = & \sqrt 2 \Im (M_l T_{n/2} \psi(x) ), &
                     l+n \in 2\mathbb{Z} + 1, l\neq 0,   \nonumber
\end{eqnarray}
see \cite{DJJ}. From \eqref{opadanjepsi} it follows that
\begin{equation}    \label{opadanje}
|\psi_{l,n} (x)| \leq C e^{-a |x|}, \;\;\; |\hat\psi_{l,n} (\xi)| \leq C
e^{-b |\xi|}, \;\;\; x,\xi \in \mathbb{R}, (l,n) \in  \mathbb{N}_{0}  \times \mathbb{Z},
\end{equation}
for some constants $ a,b,C > 0 $ depending on $l$ and $n$,
and $ \{\psi_{l,n} \} $ is therefore called the Wilson basis of exponential decay.

Equivalently, \eqref{wilson01} can be  written as
$ \psi_{0,n}   =  T_n \psi $ and
\begin{equation*}
\psi_{l,n} (x) =  \left \{
\begin{array}{rl}
\sqrt{2} \cos 2\pi lx \psi (x-\frac{n}{2}), & l+n \in 2\mathbb{Z}, l\neq 0 , \\
\sqrt{2} \sin 2\pi lx \psi (x-\frac{n}{2}), &  l+n \in 2\mathbb{Z} + 1, l\neq 0.
\end{array}
\right .
\end{equation*}
Moreover, following Gr\"ochenig \cite{Grbook}, we may rewrite \eqref{wilson01} as
\begin{multline}
\psi_{0,n} (x)  =  T_n \psi (x), \qquad
\text{and}  \nonumber \\
 \psi_{l,n} (x)  =  \frac{1}{\sqrt 2}  T_{n/2}  (M_l + (-1)^{n+l} M_{-l})\psi (x),\;\;
                      (l,n) \in \mathbb{N} \times \mathbb{Z}. \label{wilson03}
\end{multline}

To obtain an orthonormal basis  of $L^2 (\mathbb{R}^d)$, Tachizawa in \cite{T1} considered  $d-$dimensional Wilson basis given by the
tensor product
$$ \psi_{l,n} (x) = \psi_{l_1,n_1} (x_1) \otimes
\psi_{l_2,n_2} (x_2) \otimes \dots \otimes \psi_{l_d,n_d} (x_d),
$$ $x= (x_1,x_2,\dots,x_d) \in  \mathbb{R}^d,$ $ l= (l_1,l_2,\dots,l_d)
\in \mathbb{N}_0 ^d, $ $ n= (n_1,n_2,\dots,n_d) $ $ \in \mathbb{Z}^d$.
If $\psi_{l_k,n_k} (x_k)$, $ k =1,\dots, d,$ are Wilson bases of exponential decay, then we have
\begin{equation}      \label{decay-in-d}  \;
|\psi_{l,n} (x)| \leq C e^{-a |x|}, \;\;\; |\hat\psi_{l,n} (\xi)|
\leq C e^{-b |\xi|} \;\;\; x,\xi \in  \mathbb{R}^d , (l,n) \in \mathbb{N}_0 ^d \times
\mathbb{Z}^d,
\end{equation}
for some constants $ a,b,C > 0, $ depending on $l$ and $n.$

The tensor product Wilson bases are  $2^d-$modular, i.e. their elements have $2^d$ peaks in frequency,
which may have undesirable consequences in applications, see \cite{BJLO} for details.
That motivated, Bownik et al.  \cite{BJLO} to
construct a family of orthonormal Wilson bases with $2^k -$modular covering of the frequency domain with $k=1,\dots, d$.
The tensor product Wilson bases turned out to be the special case of their construction.

\par

\subsection{Weight functions}\label{subsec1.1}
A \emph{weight} on $\rr d$ is a positive function $\omega \in  L^\infty _{loc}(\rr d)$
such that $1/\omega \in  L^\infty _{loc}(\rr d)$. The weight $\omega$ on $\rr d$
is called \emph{moderate} if there is a positive locally bounded function
$v$ on $\rr d$ such that
\begin{equation}\label{eq:2}
\omega(x+y)\le C\omega(x)v(y),\quad x,y\in\rr{d},
\end{equation}
for some constant $C\ge 1$. If $\omega$ and $v$ are weights on $\rr d$ such
that \eqref{eq:2} holds, then $\omega$ is also called \emph{$v$-moderate}.
If $v$ can be chosen as polynomial, then $\omega$ is called a weight of polynomial type.
The set of all moderate weights on $\rr d$ is denoted by $\mascP _E(\rr d)$.

\par

The weight $v$ on $\rr d$ is called \emph{submultiplicative},
if it is even and \eqref{eq:2}
holds for $\omega =v$. From now on, $v$ always denotes
a submultiplicative weight if nothing else is stated. In particular,
if \eqref{eq:2} holds and $v$ is submultiplicative, then it follows
by straight-forward computations that
\begin{equation}\label{eq:2Next}
\begin{gathered}
\frac {\omega (x)}{v(y)} \lesssim \omega(x+y) \lesssim \omega(x)v(y),
\\[1ex]
\quad
v(x+y) \lesssim v(x)v(y)
\quad \text{and}\quad v(x)=v(-x),
\quad x,y\in\rr{d}.
\end{gathered}
\end{equation}

\par

If $\omega$ is a moderate weight on $\rr d$,  then there is a
submultiplicative weight
$v$ on $\rr d$ such that \eqref{eq:2} and \eqref{eq:2Next}
hold, see \cite{Groch,To11}. Moreover if $v$ is
submultiplicative on $\rr d$, then
\begin{equation}\label{Eq:CondSubWeights}
1\lesssim v(x) \lesssim e^{r|x|}
\end{equation}
for some constant $r>0$ (cf. \cite{Groch}). In particular, if $\omega$ is moderate, then
\begin{equation}\label{Eq:ModWeightProp}
\omega (x+y)\lesssim \omega (x)e^{r|y|}
\quad \text{and}\quad
e^{-r|x|}\le \omega (x)\lesssim e^{r|x|},\quad
x,y\in \rr d
\end{equation}
for some $r>0$.

\par

We will consider only weight functions $ w $ satisfying Beurling--Domar's
non--quasi\-ana\-ly\-ti\-ci\-ty condition
\begin{equation}\label{bdc}
\sum_{n=1} ^{\infty} n^{-2} \log \omega (nx,ny) < \infty, \;\;\; x,y \in \mathbb R^d.
\end{equation}
The most important examples of  weight functions which satisfy (\ref{bdc})
are $ (1 + |x|)^s, $ $ (1 + |y|)^s, $ $ (1 + |x| + |y|)^s, $  $
 e^{s(|x|^{\gamma} + |y|^{\gamma})} ,  x,y \in  \mathbb R ^d, \; s \geq 0, $ $ \gamma \in (0,1). $

If $\omega \in \mascP _E(\rr d)$ then the weighted $L^2 (\rr d)$ space, $L_\omega ^2 (\rr d)$ is given by
\begin{equation}\label{weightedl2}
f \in L_\omega ^2 (\rr d) \;\;\; \Leftrightarrow
\| f \|_{L_\omega ^2} = \| f \omega \| < \infty.
\end{equation}

\subsection{Coorbit  spaces}\label{subsec1.2}

For the purpose of this paper we focus our attention to subexponential weights of the form
$ \omega_{h,s} (\cdot) = e^{h |\cdot|^{1/s}}, $ $ s > 1,$ $ h\geq 0$.

\begin{defn} \label{def:koorbit}
Let there be given $ s > 1, $ $ h\geq 0 $  and $\phi \in \Sigma_1 (\rr d) \setminus 0. $ The coorbit space
$ {\mathcal C}o Y^{h, s}   (\rr d) $ is defined by
\begin{align}
{\mathcal C}o Y^{h, s}    (\rr d) = \{  f \in & \Sigma_1 ' (\rr d) \;\; | \nonumber \\[1ex]
& \| f \|_{{\mathcal C}o Y^{h, s}  }  \equiv \int_{\rr d}  (
\int_{\rr d} |V_\phi f(x,\xi )|^2 dx ) e^{2h |\xi|^{1/s}} d\xi < \infty
 \}.  \label{koorbit}
\end{align}
\end{defn}

In other words, $ f \in {\mathcal C}o Y^{h, s}    (\rr d) $ if
$  \displaystyle F(\xi)  = \int_{\rr d} |V_\phi f(x,\xi )|^2 dx $ $ \in  L_{\omega_{h,s}} ^2 (\rr d)$
(cf. \eqref{weightedl2}).

This terminology (and notation) is justified by the general theory of coorbit spaces developed in \cite{FG1, FG2}, see also \cite{Dahlke}
for a more recent survey.

From the results given there, it follows that $ {\mathcal C}o Y^{h, s}  (\rr d) $ is a Banach space
invariant under translations, modulations, and complex conjugations.
Moreover, ${\mathcal C}o Y^{h, s}$ is independent on the
choice of $ \phi \in \Sigma_1 (\rr d) \setminus 0 $, see e.g. \cite[Proposition 3.2 {\em ii)}]{Dahlke}.

We will use the following simple results.

\begin{lemma}  \label{lema1}
Let  $ s > 1, $ $ h\geq 0 $,  and $\phi \in \Sigma_1 (\rr d) \setminus 0. $
Then $ \hat f \in {\mathcal C}o Y^{h, s}  (\rr d) $ if and only if
\begin{equation}\label{lema1koorbit}
\int_{\rr d}  (
\int_{\rr d} |V_\phi \hat f(y, \eta)|^2 d\eta ) e^{2h |y|^{1/s}} dy < \infty.
\end{equation}
\end{lemma}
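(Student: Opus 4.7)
The plan is to apply the fundamental identity of time-frequency analysis \eqref{fundident} together with the window-independence of $\mathcal{C}oY^{h,s}$ recorded after Definition~\ref{def:koorbit}, invoking the Fourier-invariance $\mathcal F(\Sigma_1)=\Sigma_1$ so that $\hat\phi$ also lies in $\Sigma_1\setminus 0$ and may serve as an admissible window.

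First I would apply \eqref{fundident} with $f$ replaced by $\hat f$: using $\widehat{\widehat f}=f_-$ where $f_-(x):=f(-x)$, this gives
\[
|V_\phi \hat f(y,\eta)| = |V_{\hat\phi} f_-(\eta,-y)|.
\]
Substituting into \eqref{lema1koorbit} and performing the measure-preserving change of variables $u=\eta$, $v=-y$ (which leaves $|y|^{1/s}=|v|^{1/s}$ invariant) converts the integral in \eqref{lema1koorbit} to
\[
\int_{\rr d}\int_{\rr d}|V_{\hat\phi} f_-(u,v)|^2\,du\,e^{2h|v|^{1/s}}\,dv,
\]
which is the squared coorbit norm of $f_-$ evaluated with the admissible window $\hat\phi$. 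By window-independence this agrees up to equivalence with $\|f_-\|^2_{\mathcal{C}oY^{h,s}}$, and a short direct STFT calculation gives the reflection-invariance $\|f_-\|_{\mathcal{C}oY^{h,s}}\asymp\|f\|_{\mathcal{C}oY^{h,s}}$, so \eqref{lema1koorbit} is finite iff $f\in\mathcal{C}oY^{h,s}$.

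An analogous manipulation of the defining integral $\int\!\int|V_\phi\hat f(x,\xi)|^2\,dx\,e^{2h|\xi|^{1/s}}\,d\xi$ of $\|\hat f\|^2_{\mathcal{C}oY^{h,s}}$---applying \eqref{fundident} in the same way and then substituting $u=\xi$, $v=-x$---combined with the same window- and reflection-invariance reductions identifies it with the same coorbit norm, yielding the asserted equivalence. The main obstacle is the algebraic bookkeeping across the two applications of \eqref{fundident}: the swap $(x,\xi)\mapsto(\xi,-x)$ and its interplay with the subsequent change of variables and the reflection $\widehat{\widehat f}=f_-$ must be tracked carefully, but once organized, the conclusion follows at once from the window- and reflection-invariance of $\mathcal{C}oY^{h,s}$.
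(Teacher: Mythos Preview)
Your strategy—use the fundamental identity \eqref{fundident} together with window-independence—is exactly the paper's; the paper simply specializes to the Gaussian $\phi=\hat\phi$ from the outset, which absorbs your window-change and reflection steps into a single chain of equalities.

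There is, however, a genuine gap in your last paragraph. The first computation is correct: substituting \eqref{fundident} into \eqref{lema1koorbit} and changing variables gives $\int_{\rr d}\big(\int_{\rr d}|V_{\hat\phi}f_-(u,v)|^2\,du\big)e^{2h|v|^{1/s}}\,dv$, and this \emph{is} the $\mathcal{C}oY^{h,s}$-norm of $f_-$ with window $\hat\phi$ (weight on the second STFT argument, inner integral in the first). But your ``analogous manipulation'' of the defining integral $\int\!\int|V_\phi\hat f(x,\xi)|^2\,dx\,e^{2h|\xi|^{1/s}}\,d\xi$ does \emph{not} land on a coorbit norm: carrying out the same substitution yields
\[
\int_{\rr d}\Big(\int_{\rr d}|V_{\hat\phi}f_-(u,v)|^2\,dv\Big)e^{2h|u|^{1/s}}\,du,
\]
with the weight now on the \emph{first} argument and the inner integral in the \emph{second}. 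Window- and reflection-invariance let you replace $\hat\phi$ by $\phi$ and $f_-$ by $f$, but they do not let you swap which STFT argument carries the weight—and that swap is precisely the content of the lemma, so the argument becomes circular. Note that the paper's own chain of equalities terminates at $\int\!\int|V_\phi f(y,\eta)|^2\,d\eta\,e^{2h|y|^{1/s}}\,dy$ (with $f$, not $\hat f$); if \eqref{lema1koorbit} is read with $V_\phi f$ in place of $V_\phi\hat f$, your second computation alone already \emph{is} the whole proof and the first becomes superfluous.
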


\begin{proof} Since different elements $ \phi \in \Sigma_1 (\rr d) \setminus 0 $ give rise to the same space
$ {\mathcal C}o Y^{h, s}  (\rr d) $ we may take the Gaussian $ \phi (x) = \hat \phi (x) = e^{-\pi x^2} $.
By \eqref{fundident} and the change of variables we have:
\begin{multline*}
\int_{\rr d}  ( \int_{\rr d} |V_\phi \hat f(x,\xi )|^2 dx ) e^{2h |\xi|^{1/s}} d\xi
\\[1ex]
= \int_{\rr d}  ( \int_{\rr d} |V_{\hat \phi} \hat f(x,\xi )|^2 dx ) e^{2h |\xi|^{1/s}} d\xi
\\[1ex]
= \int_{\rr d}  ( \int_{\rr d} |V_{\phi} f (-\xi, x )|^2 dx ) e^{2h |\xi|^{1/s}} d\xi
\\[1ex]
= \int_{\rr d}  ( \int_{\rr d}  |V_{\phi} f (y, \eta )|^2 d\eta ) e^{2h |y|^{1/s}} dy.
\end{multline*}
Thus  $ \hat f \in {\mathcal C}o Y^{h, s}  (\rr d) $ if and only if \eqref{lema1koorbit} holds true.
\end{proof}

We write
\begin{equation}\label{furijekoorbit}
f \in \mathscr{F}{\mathcal C}o Y^{h, s} (\rr d) \quad \text{ if } \quad \hat f \in {\mathcal C}o Y^{h, s} (\rr d).
\end{equation}
(According to the general theory of coorbit spaces it follows that
$\mathscr{F}{\mathcal C}o Y^{h, s} (\rr d)$ is a coorbit space as well.)

\begin{lemma} \label{lema2}
Let there be given $ s > 1 $ and $ h\geq 0 $.
Then we have
\begin{itemize}
\item[a)]
$ f \in {\mathcal C}o Y^{h, s}  (\rr d) $ if and only if $ f(x) e^{h |x|^{1/s}} \in L^2 (\mathbb R ^d).$
\item[b)]
$ f \in \mathscr{F}{\mathcal C}o Y^{h, s}  (\rr d)$ if and only if $ \hat f (\xi) e^{h|\xi|^{1/s}} \in L^2 (\mathbb R ^d).$
\end{itemize}
\end{lemma}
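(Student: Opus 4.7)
The plan is to reduce each part of the lemma to a Plancherel computation of the inner integral in \eqref{koorbit}, followed by absorbing the resulting convolution against $|\phi|^2$ (or $|\hat\phi|^2$) into the subexponential weight. Since $\mathcal{C}oY^{h,s}$ is independent of the non-zero window in $\Sigma_1(\rr d)$, I would throughout take the Gaussian $\phi(x)=\hat\phi(x)=e^{-\pi x^2}$, for which $|\phi|^2$ and $|\hat\phi|^2$ are strictly positive on every compact set.

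For part (a), the key observation is that, as a function of $x$ for fixed $\xi$, $V_\phi f(\cdot,\xi)$ is a convolution, so Plancherel in $x$ produces an identity of the form $\int|V_\phi f(x,\xi)|^2\,dx=\int|\hat f(y)|^2|\hat\phi(y-\xi)|^2\,dy$ (cf.\ also \eqref{fundident}). Substituting this into \eqref{koorbit}, applying Fubini, and invoking the two-sided weighted estimate
\begin{equation*}
\int_{\rr d}|g(y-\xi)|^2\,e^{2h|\xi|^{1/s}}\,d\xi\asymp e^{2h|y|^{1/s}},\qquad y\in\rr d,
\end{equation*}
valid for any $g\in\Sigma_1(\rr d)$ that is strictly positive on a neighborhood of the origin (and $g=\hat\phi$ in our case), collapses the coorbit norm to a weighted $L^2$-norm of the form required. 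Part (b) I would then obtain from part (a) combined with Lemma \ref{lema1}: the latter re-expresses the defining integral with space and frequency interchanged, and the same Plancherel-plus-absorption argument --- now producing a convolution against $|\phi|^2$ --- together with $\widehat{\hat f}(x)=f(-x)$ and the evenness of $|\cdot|^{1/s}$ delivers the claim.

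The main obstacle is the two-sided weighted estimate above. The upper bound is immediate from the subadditivity $(a+b)^{1/s}\le a^{1/s}+b^{1/s}$, valid for $s>1$, combined with the faster-than-subexponential decay of functions in $\Sigma_1(\rr d)$, which ensures $\int|g(u)|^2 e^{2h|u|^{1/s}}\,du<\infty$. The matching lower bound is the only delicate step: restricting the integration to a unit ball around $\xi=y$, one exploits the strict positivity of $|g|^2$ there and the elementary inequality $(|y|-1)^{1/s}\ge|y|^{1/s}-1$ to transfer the weight from $\xi$ back to $y$ up to a universal multiplicative constant.
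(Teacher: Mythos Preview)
Your overall strategy --- Plancherel on the inner STFT integral, followed by a two-sided moderate-weight sandwich --- is exactly the paper's, and your justification of the lower bound (positivity of $|g|^2$ near the origin together with $(|y|-1)^{1/s}\ge|y|^{1/s}-1$) is a correct, more explicit variant of the paper's bare appeal to \eqref{eq:2Next}.

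The one real issue is \emph{which} variable you Plancherel in for part (a). Integrating in $x$ gives $\int|V_\phi f(x,\xi)|^2\,dx=\int|\hat f(y)|^2|\hat\phi(y-\xi)|^2\,dy$, and after the weight absorption you obtain
\[
\|f\|_{{\mathcal C}o Y^{h,s}}\asymp\int_{\rr d}|\hat f(y)|^2\, e^{2h|y|^{1/s}}\,dy,
\]
a weighted $L^2$ condition on $\hat f$, not on $f$; this is \emph{not} the form stated in (a). The paper instead applies Plancherel in $\xi$ to get $\int|V_\phi f(x,\xi)|^2\,d\xi=\int|f(t)|^2|\phi(t-x)|^2\,dt$ and then sandwiches with the weight $e^{2h|x|^{1/s}}$, landing on $\int|f(t)|^2 e^{2h|t|^{1/s}}\,dt$. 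Notice, however, that the quantity the paper thereby estimates is $\int\bigl(\int|V_\phi f|^2\,d\xi\bigr)e^{2h|x|^{1/s}}\,dx$, with the roles of $x$ and $\xi$ swapped relative to \eqref{koorbit}. In short: your computation is faithful to Definition~\ref{def:koorbit} but produces the weighted $L^2$ condition on $\hat f$, while the paper's computation produces the condition of (a) but for the transposed norm. This labelling mismatch between Definition~\ref{def:koorbit} and Lemma~\ref{lema2} is immaterial downstream, since only the intersection ${\mathcal C}o Y^{h,s}\cap\mathscr F{\mathcal C}o Y^{h,s}$ is ever used (Theorem~\ref{esemes}), where the two weighted $L^2$ conditions enter symmetrically.
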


\begin{proof} We again choose $ \phi (x) = e^{-\pi x^2} $ in the definition of ${\mathcal C}o Y^{h, s}  (\rr d)$
and follow the idea of the proof of \cite[Proposition 11.3.1]{Grbook}.
By \eqref{stft01} and the Plancherel theorem we formally have
$$
\int_{\rr d} |V_\phi  f(x,\xi )|^2 d\xi =
\int_{\rr d} |f(t)|^2 |\phi(t-x)|^2 dt.
$$
Since $e^{2 h |x|^{1/s}}$ is a moderate weight,
it follows that
$$
 e^{-2h |u|^{1/s}} e^{2h |t|^{1/s}} \lesssim
e^{2h |t-u|^{1/s}} \lesssim e^{2h |t|^{1/s}} e^{2h |u|^{1/s}}, \;\;\; t,u \in  \mathbb R ^d,
$$
cf. \eqref{eq:2Next}. Therefore,
\begin{multline*}
 \int_{\rr d}   |f(t)|^2 e^{2h |t|^{1/s}} dt
\int_{\rr d} |\phi(u)|^2 e^{-2h |u|^{1/s}} du
\\[1ex]
\lesssim \int_{\rr d} \int_{\rr d} |f(t)|^2 |\phi(u)|^2 e^{2h |t-u|^{1/s}} dt du
\\[1ex]
= \int_{\rr d} \int_{\rr d} |f(t)|^2 |\phi(t-x)|^2 e^{2h |x|^{1/s}} dt dx
\\[1ex]
\lesssim   \int_{\rr d}   |f(t)|^2 e^{2h |t|^{1/s}} dt
\int_{\rr d} |\phi(u)|^2 e^{2h |u|^{1/s}} du,
\end{multline*}
and a) follows.

Part b) follows from a) and the arguments given in the proof of Lemma \ref{lema1}.
\end{proof}

\par

From the general theory of coorbit spaces it follows that  Wilson bases of exponential decay
are unconditional bases for  ${\mathcal C}o Y^{h, s}  (\rr d)$ and  $\mathscr{F}{\mathcal C}o Y^{h, s}  (\rr d)$,
\cite{FGW, PT1}. The precise statement is the following.

\begin{thm} \label{coorbitandwilson}
Let there be given  $ s > 1 $, $ h\geq 0 $,
and the Wilson basis of exponential decay $ \{ \psi_{l,n} \}_{ l \in \mathbb N_{0}, n \in \mathbb Z}$.
Let $ {\mathcal C}o Y^{h, s}  (\rr d)  $ and $\mathscr{F}{\mathcal C}o Y^{h, s}  (\rr d)$  be given by \eqref{koorbit} and
\eqref{furijekoorbit} respectively.
Then we have:
\begin{itemize}
\item[a)] the Wilson basis  $ \{ \psi_{l,n} \}_{ l \in \mathbb N_{0}, n \in \mathbb Z}$ of exponential decay is an unconditional basis for the
coorbit spaces $ {\mathcal C}o Y^{h, s}  (\rr d)  $ and $\mathscr{F}{\mathcal C}o Y^{h, s}  (\rr d)$.
\item[b)] Every function $ f \in {\mathcal C}o Y^{h, s}  (\rr d) $ has the unique
expansion
\begin{equation}\label{wilsonexpansion}
f =
 \sum_{l \in \mathbb  N_{0}, n \in \mathbb Z} c_{l,n} \psi_{l,n}  \;\; \text{where} \;
c_{l,n} =
\langle f,  \psi_{l,n} \rangle, \;\;\;
l \in \mathbb{N}_{0}, n \in \mathbb{Z},
\end{equation}
and
\begin{equation}\label{rastkoef1}
\sum\limits_{l = 0} ^{\infty} \left ( \sum_{n\in \mathbb Z} |c_{l,n}|^2 \right )
e^{2h|l|^{1/s}} < \infty.
\end{equation}
\item[c)]  Every function $ f \in \mathscr{F}{\mathcal C}o Y^{h, s}  (\rr d) $ has the unique
expansion of the form \eqref{wilsonexpansion} and
\begin{equation}\label{rastkoef2}
\sum\limits_{l = 0} ^{\infty} \left ( \sum_{n\in \mathbb Z} |c_{l,n}|^2 \right )
e^{2h|\frac{n}{2}|^{1/s}} < \infty.
\end{equation}
\end{itemize}
\end{thm}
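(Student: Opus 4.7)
The plan is to deduce all three statements from the Wilson basis characterization of (weighted) modulation spaces established in \cite{FGW}, once $\mathcal{C}oY^{h,s}(\mathbb{R}^d)$ and $\mathscr{F}\mathcal{C}oY^{h,s}(\mathbb{R}^d)$ are recognized as particular modulation spaces. Indeed, by \eqref{koorbit} the space $\mathcal{C}oY^{h,s}$ is the modulation space $M^2_m(\mathbb{R}^d)$ with time--frequency weight $m(x,\xi)=e^{h|\xi|^{1/s}}$, while Lemma \ref{lema1} identifies $\mathscr{F}\mathcal{C}oY^{h,s}$ with the modulation space $M^2_{\widetilde m}(\mathbb{R}^d)$ associated with $\widetilde m(x,\xi)=e^{h|x|^{1/s}}$. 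Both $m$ and $\widetilde m$ are submultiplicative weights of subexponential type satisfying the Beurling--Domar condition \eqref{bdc}, and a Gaussian window $\phi(x)=e^{-\pi|x|^2}\in\Sigma_1(\mathbb{R}^d)$ is admissible for the coorbit construction.

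For part (a), I would invoke the Wilson basis theorem of \cite{FGW} in the form valid for Beurling--Domar weights (see also \cite[Ch.~12]{Grbook} and the survey \cite{Dahlke}): the Wilson basis of exponential decay is an unconditional Schauder basis for any modulation space $M^2_\mu$ whose weight $\mu$ satisfies Beurling--Domar. Applying this once with $\mu=m$ and once with $\mu=\widetilde m$ yields both halves of (a), and in particular delivers the unique expansion \eqref{wilsonexpansion} with $c_{l,n}=\langle f,\psi_{l,n}\rangle$ needed in (b) and (c).

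For the weighted $\ell^2$-conditions in (b) and (c), the same theorem gives a stronger statement: the analysis map $f\mapsto(c_{l,n})$ is a topological isomorphism onto a weighted $\ell^2$-space, the weight of $c_{l,n}$ being (up to equivalence) the value of the underlying time--frequency weight at the phase-space centre of the atom $\psi_{l,n}$. From \eqref{wilson03}, combined with the STFT covariance under translations and modulations and the exponential decay \eqref{decay-in-d}, $V_\phi\psi_{l,n}$ is sharply concentrated near the points $(n/2,\pm l)$. Evaluating $m$ at such centres gives the coefficient weight $e^{h|l|^{1/s}}$, whose square reproduces \eqref{rastkoef1}; evaluating $\widetilde m$ gives $e^{h|n/2|^{1/s}}$, whose square reproduces \eqref{rastkoef2}.

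The only nontrivial point I foresee is verifying that the Wilson basis theorem of \cite{FGW}, originally formulated for polynomially moderate weights, applies without modification to the present subexponential weights. This extension is by now standard in coorbit theory, provided the weight satisfies Beurling--Domar and the window lies in $\Sigma_1(\mathbb{R}^d)$ (\cf.\ \cite{Groch, Dahlke}); once these verifications are recorded, the remainder of the argument is a direct bookkeeping of the weights through the coefficient isomorphism.
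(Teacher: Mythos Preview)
Your proposal is correct and matches the paper's approach: the paper itself omits the proof, remarking that it follows from the arguments of Theorem~4 in \cite{FGW} (stated there for polynomial weights) together with the extension to subexponential weights carried out in \cite{PT1}. Your identification of $\mathcal{C}oY^{h,s}$ and $\mathscr{F}\mathcal{C}oY^{h,s}$ as the modulation spaces $M^2_m$ and $M^2_{\widetilde m}$, followed by an appeal to the Wilson-basis/coefficient isomorphism of \cite{FGW} under the Beurling--Domar condition, is precisely this line of reasoning spelled out in slightly more detail.
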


\begin{proof} The proof is omitted since it follows by the arguments given in
the proof of Theorem 4 in \cite{FGW}, where polynomial type weights are considered instead.
The subexponential type weights considered here are treated in \cite{PT1}, see Theorem 4.4 and Remark 4.5 given there.
\end{proof}

Theorem  \ref{coorbitandwilson} is the main auxiliary result which
will be used to prove representation theorem for Gelfand-Shilov spaces, Theorem \ref{main1} a).

\subsection{Gelfand-Shilov spaces}\label{subsec1.2}
Gelfand and Shilov introduced the spa\-ces of type $S$, for the analysis  of solutions of certain parabolic initial-value problems.
A comprehensive study of those spaces   which are afterwards called Gelfand-Shilov spaces is given in \cite{GS}.
We focus our attention to the case when regularity and  decay are controlled by the so called Gevrey sequences
$M_p = p!^s, $ when $s > 0 $, and refer to e.g. \cite{Te3} for an overview of a more general situation.

Let $0<s $ be fixed. Then the (Fourier invariant)
Gelfand-Shilov space $\maclS _s(\rr d)$ ($\Sigma _s(\mathbb R^ d)$) of
Roumieu type (Beurling type) consists of all $f\in C^\infty (\mathbb R ^d)$
such that
\begin{equation}\label{gfseminorm}
\nm f{\maclS _{s,h}}\equiv \sup_{\alpha ,\beta \in \mathbb N^d} \frac {|x^\alpha \partial ^\beta
f(x)|}{h^{|\alpha  + \beta |}(\alpha !\, \beta !)^s}
\end{equation}
is finite for some $h>0$ (for every $h>0$).  The semi-norms
$\nm \cdo {\maclS _{s,h}}$ induce inductive limit topology for the
space $\maclS _s(\rr d)$, and projective limit topology for $\Sigma _s(\rr d)$. Thus the
former space becomes an LS space, while the latter space is an FS space (Fr{\'e}chet-Schwartz space) under these topologies.

The space $\maclS _s(\rr d)\neq \{ 0\}$ ($\Sigma _s(\rr d)\neq \{0\}$), if and only if
$s\ge \frac 12$ ($s> \frac 12$).

\par

The \emph{Gelfand--Shilov distribution spaces} $\maclS _s'(\rr d)$
and $\Sigma _s'(\rr d)$ (also known as spaces of tempered ultradistributions)
are the dual spaces of $\maclS _s(\rr d)$
and $\Sigma _s(\rr d)$, respectively.

\par

We have
\begin{equation}\label{GSembeddings}
\begin{aligned}
\maclS _{1/2} (\rr d) &\hookrightarrow \Sigma _s  (\rr d) \hookrightarrow
\maclS _s (\rr d)
\hookrightarrow  \Sigma _t(\rr d)
\\[1ex]
&\hookrightarrow
\mascS (\rr d)
\hookrightarrow \mascS '(\rr d) 
\hookrightarrow \Sigma _t' (\rr d)
\\[1ex]
&\hookrightarrow  \maclS _s'(\rr d)
\hookrightarrow  \Sigma _s'(\rr d) \hookrightarrow \maclS _{1/2} '(\rr d),
\quad \frac 12<s<t.
\end{aligned}
\end{equation}

\par

The Fourier transform $\mathscr F$ extends
uniquely to homeomorphisms on $\maclS _s'(\rr d)$ and on $\Sigma _s'(\rr d)$. Furthermore,
$\mascF$ restricts to
homeomorphisms on $\maclS _s(\rr d)$ and on $\Sigma _s (\rr d)$.
Similar facts hold true
when the Fourier transform is replaced by a partial
Fourier transform.

Fourier transform invariance of
$\maclS _s(\rr d)$ and $\Sigma _s (\rr d)$ follows from the following result which also gives
a characterization of Gelfand-Shilov spaces in terms of coorbit spaces.

\par

\begin{thm} \label{esemes}  Let there be given $ s>1.$
The following conditions are equivalent:
\begin{itemize}
\item[a)] $ f \in {\mathcal S}_s (\rr d) $ ($ f \in \Sigma_s (\rr d) $);
\item[b)] there exists $h>0$ (for every  $h>0$)
\begin{equation}\label{GSsymmetric}
\sup_{x \in\rr d}  | f(x) | e^{h|x|^{1/s}} < \infty \quad \text{and }\quad
\displaystyle \sup_{\xi \in \rr d}  | \hat f (\xi) | e^{h|\xi|^{1/s}} < \infty ;
\end{equation}
\item[c)] there exists $h>0$ (for every  $h>0$) such that
$$
f \in  {\mathcal C}o Y^{h, s}  (\rr d)   \cap \mathscr{F}{\mathcal C}o Y^{h, s}  (\rr d).
$$
\end{itemize}
\end{thm}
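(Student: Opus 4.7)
The plan is to prove the three equivalences in sequence, with Lemma~\ref{lema2} playing the key role of bridging (b) and (c). For (a) $\Leftrightarrow$ (b) I would appeal to the classical symmetric characterization of Gelfand--Shilov spaces (see \cite{GS} or the survey \cite{Te3}). Starting from the seminorm estimates \eqref{gfseminorm}, the inequalities $\sup_x|x^\alpha f(x)|\le h^{|\alpha|}\alpha!^s$ and $\sup_\xi|\xi^\alpha \hat f(\xi)|\le h^{|\alpha|}\alpha!^s$ can be assembled into the sup-bounds \eqref{GSsymmetric} by summing a Gevrey-type series in $\alpha$ and invoking Stirling's formula; the converse runs the same computation in reverse. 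Since this equivalence is well documented, I would merely cite it.

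The implication (b) $\Rightarrow$ (c) is then immediate via Lemma~\ref{lema2}: if $|f(x)|\le M e^{-h|x|^{1/s}}$, then $|f(x)|^2 e^{2h'|x|^{1/s}}\le M^2 e^{-2(h-h')|x|^{1/s}}$ lies in $L^1(\mathbb R^d)$ for any $h'<h$, so that $f\in\mathcal{C}oY^{h',s}(\mathbb R^d)$, and the same argument applies to $\hat f$. In the Beurling case this works for every $h'>0$; in the Roumieu case one takes, for instance, $h'=h/2$.

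The harder direction is (c) $\Rightarrow$ (b), and here I plan to use the Gaussian window $\phi(x)=e^{-\pi|x|^2}$ to estimate $V_\phi f$ pointwise. Applying Cauchy--Schwarz in \eqref{stft01} with the weight $w(y)=e^{h'|y|^{1/s}}$ gives
\begin{equation*}
|V_\phi f(x,\xi)|\le \|f\cdot w\|_{L^2}\,\|\phi(\cdot-x)/w\|_{L^2}.
\end{equation*}
The subadditivity $|x|^{1/s}\le|y|^{1/s}+|y-x|^{1/s}$ (valid since $1/s<1$), combined with the Gaussian decay of $\phi$, bounds the second factor by $C\,e^{-h'|x|^{1/s}}$, yielding $|V_\phi f(x,\xi)|\lesssim e^{-h'|x|^{1/s}}$. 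Running the same computation on the Fourier side through the fundamental identity \eqref{fundident} together with the $L^2$-weighted control of $\hat f$ produces the companion estimate $|V_\phi f(x,\xi)|\lesssim e^{-h'|\xi|^{1/s}}$. Taking the geometric mean gives $|V_\phi f(x,\xi)|\lesssim e^{-(h'/2)(|x|^{1/s}+|\xi|^{1/s})}$.

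To finish, I would insert this two-variable bound into the STFT inversion $f=\|\phi\|^{-2}\iint V_\phi f(y,\eta)\,M_\eta T_y\phi\,dy\,d\eta$. The $\eta$-integration contributes a finite constant, while the $y$-integration, handled by a second use of subadditivity together with the Gaussian window, yields $|f(x)|\lesssim e^{-h''|x|^{1/s}}$ for some (resp.\ every) $h''>0$. The analogous estimate for $\hat f$ follows by the Fourier-invariance of condition (c) recorded in Lemma~\ref{lema1}. The main technical obstacle I anticipate is bookkeeping of the exponential rates through these reductions; however, since every step only costs an arbitrarily small amount in the exponent, the Beurling and the Roumieu formulations come out in parallel.
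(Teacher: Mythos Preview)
Your argument is correct but takes a different route from the paper for the direction (c) $\Rightarrow$ (b). The paper handles (b) $\Leftrightarrow$ (c) entirely through Lemma~\ref{lema2}: coorbit membership is equivalent to the weighted $L^2$ conditions \eqref{GSsymmetricL2}, and then the equivalence of the $L^2$-weighted norms with the sup-norm conditions \eqref{GSsymmetric} is imported from \cite[Theorem 2.2]{PT1} (with a remark that the Roumieu case follows by minor modifications). So the paper's proof is essentially two citations plus Lemma~\ref{lema2}.

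You instead give a self-contained argument for (c) $\Rightarrow$ (b): starting from the $L^2$-weighted control furnished by Lemma~\ref{lema2}, you obtain a pointwise decay estimate on $V_\phi f$ by Cauchy--Schwarz with a subexponential weight, combine the $x$- and $\xi$-side bounds via the geometric mean, and then push this through the absolutely convergent STFT inversion to recover the sup-norm bound on $f$ (and on $\hat f$ via Lemma~\ref{lema1}). This is longer, but it avoids the external reference to \cite{PT1} and in effect re-derives the $L^2$--$L^\infty$ equivalence by time-frequency methods, in the spirit of Proposition~\ref{stftGelfand2}. One small point worth making explicit: the pointwise use of the inversion formula is justified because your two-variable STFT bound makes the integral absolutely convergent, so $f$ agrees a.e.\ with a continuous function and the sup-norm estimate is meaningful. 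Otherwise the bookkeeping of exponents (losing at most a factor of $2$ at each step) is exactly as you describe, and both the Roumieu and Beurling cases go through.
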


\begin{proof}
a) $ \Leftrightarrow$  b) is  well known, and holds for all $ s>0 $, \cite{CCK, GZ, KPP}.

 b) $ \Leftrightarrow$  c). By $d-$dimensional version of \cite[Theorem 2.2]{PT1}, it follows that
the sup-norms in \eqref{GSsymmetric} can be replaced by $ L^2 _{\omega_{k,s}} $--norms,
i.e. for every $k>0$ we have
\begin{equation}\label{GSsymmetricL2}
\int_{\rr d} |f(x)|^2  e^{2k |x|^{1/s}} dx < \infty \quad \text{and} \quad
\int_{\rr d} |\hat f(\xi)|^2  e^{2k |\xi|^{1/s}} d\xi < \infty.
\end{equation}
This, together with Lemma \ref{lema2}
gives
$$
f \in \Sigma_s (\rr d) \quad \Leftrightarrow \quad
f \in  {\mathcal C}o Y^{h, s}  (\rr d)   \cap \mathscr{F}{\mathcal C}o Y^{h, s}  (\rr d)
$$
for every $h>0$.

For the Roumieu case, by slight modifications of the proof of  \cite[Theorem 2.2]{PT1}, it also follows that
\eqref{GSsymmetric} holds for some $h>0$ if and only if
\eqref{GSsymmetricL2} holds for some $k>0$. Again, Lemma \ref{lema2}
imply that this is equivalent with
$ \displaystyle f \in  {\mathcal C}o Y^{h, s}  (\rr d)   \cap \mathscr{F}{\mathcal C}o Y^{h, s}  (\rr d)
$  for some $h>0$, and the proof is finished.
\end{proof}

\par

Note that, since the equivalence between a) and b) in Theorem \ref{esemes} holds even if $ s= 1$,
if $\psi $ satisfies \eqref{opadanjepsi}, then the Wilson basis elements $ \psi_{l,n}, $ $l \in \mathbb{N}_{0}$, 
$n \in \mathbb{Z}, $ given by \eqref{wilson01} belong to $ {\mathcal S}_1 (\rr d) $, cf. \eqref{opadanje}.

\par

The restriction $ s>1 $ when proving b) $ \Leftrightarrow $ c) in Theorem \ref{esemes}  comes from Definition \ref{def:koorbit}. In fact, in the general theory of coorbit spaces, as presented in \cite{FG1, FG2}, an important role  is played by BUPUs
({\em bounded uniform partitions of unity}) consisting of compactly supported smooth functions. In such setting, coorbit spaces consist of non--quasianalytic functions.

\par

From definitions of $ {\mathcal C}o Y^{h, s}  (\rr d)  $, $ \mathscr{F}{\mathcal C}o Y^{h, s}  (\rr d)$
and Theorem \ref{esemes} c) it follows  that
Gelfand-Shilov spaces are essentially characterized by the decay estimates of the short-time Fourier
transform. Note that the estimates given in Proposition \ref{stftGelfand2} below  employ the sup-norm ($L^\infty$-norm)
whereas in Theorem \ref{esemes} c) the $L^2$-norm related to $ {\mathcal C}o Y^{h, s}  (\rr d)  $ and
$\mathscr{F}{\mathcal C}o Y^{h, s}  (\rr d) $ is considered instead. (In fact, any $L^p-$norm ($1\leq p\leq \infty$) can be
 used, see \cite{P1}.)

\par

\begin{prop}\label{stftGelfand2}
Let $s\ge \frac 12$ ($s>\frac 12$), $\phi \in \maclS _s(\rr d)\setminus 0$
($\phi \in \Sigma _s(\rr d)\setminus 0$) and let $f$ be a
Gelfand-Shilov distribution on $\rr d$. Then the following is true:
\begin{itemize}
\item[a)] $f\in \maclS _s (\rr d)$ ($f\in \Sigma_s(\rr d)$), if and only if
\begin{equation}\label{stftexpest2}
|V_\phi f(x,\xi )| \lesssim  e^{-r (|x|^{\frac 1s}+|\xi |^{\frac 1s})}, \quad x,\xi \in \rr d,
\end{equation}
for some $r > 0$ (for every $r>0$).
\item[b)] $f\in \maclS _s'(\rr d)$ ($f\in \Sigma _s'(\rr d)$), if and only if
\begin{equation}\label{stftexpest2Dist}
|V_\phi f(x,\xi )| \lesssim  e^{r(|x|^{\frac 1s}+|\xi |^{\frac 1\sigma})}, \quad
x,\xi \in \rr d,
\end{equation}
for every $r > 0$ (for some $r > 0$).
\end{itemize}
\end{prop}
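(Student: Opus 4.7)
The plan is to deduce both parts from Theorem \ref{esemes} by exploiting the integral representation \eqref{stft01} together with the fundamental identity \eqref{fundident}, which allows us to trade decay in the time variable for decay in the frequency variable. The key analytic ingredient is the subadditivity inequality
\[
|x|^{1/s} \le |y|^{1/s} + |y-x|^{1/s}, \qquad s\ge 1,
\]
which lets the weights $e^{h|\cdot|^{1/s}}$ act essentially as a submultiplicative structure on convolution-type integrals (it is the concrete form of the moderate weight inequality \eqref{eq:2Next}).

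For the necessity direction of part a), I fix $\phi\in \maclS_s(\rr d)$ (respectively $\Sigma_s(\rr d)$) and invoke Theorem \ref{esemes} b) to obtain $|f(y)|\lesssim e^{-h|y|^{1/s}}$ and $|\phi(y-x)|\lesssim e^{-h|y-x|^{1/s}}$ for some (every) $h>0$. Inserting these into \eqref{stft01}, splitting $e^{-h|y|^{1/s}}e^{-h|y-x|^{1/s}}=e^{-(h/2)|y|^{1/s}}e^{-(h/2)|y-x|^{1/s}}\cdot e^{-(h/2)(|y|^{1/s}+|y-x|^{1/s})}$ and using subadditivity on the last factor yields
\[
|V_\phi f(x,\xi)|\lesssim e^{-(h/2)|x|^{1/s}}\int_{\rr d} e^{-(h/2)|y|^{1/s}}\,dy \lesssim e^{-(h/2)|x|^{1/s}}.
\]
Applying the same argument to $\hat f,\hat \phi$ and using \eqref{fundident} gives decay in $\xi$, and the geometric mean of the two bounds produces \eqref{stftexpest2} with $r=h/4$.

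For sufficiency in part a), I start from the inversion formula
\[
f=\frac 1{\|\phi\|^2}\int_{\rr d}\!\int_{\rr d} V_\phi f(x,\xi)\,M_\xi T_x\phi\,dx\,d\xi,
\]
(valid on $\Sigma_s'$ because the integrand is absolutely convergent under the hypothesis \eqref{stftexpest2}), estimate $|f(y)|$ pointwise by integrating the product of $e^{-r(|x|^{1/s}+|\xi|^{1/s})}$ with $|\phi(y-x)|$, and apply subadditivity once more to extract decay $e^{-r'|y|^{1/s}}$. The analogous estimate for $\hat f$ follows from \eqref{fundident}. Theorem \ref{esemes} then places $f$ in $\maclS_s(\rr d)$ (or $\Sigma_s(\rr d)$). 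Part b) is handled by duality: for the necessity, I write $V_\phi f(x,\xi)=\langle f,M_\xi T_x\phi\rangle$ and bound it by an $\maclS_{s,h}$-seminorm of $M_\xi T_x\phi$, whose translation/modulation dependence contributes the claimed exponential growth $e^{r(|x|^{1/s}+|\xi|^{1/s})}$; for the converse, the growth bound on $V_\phi f$ ensures that $f$, reconstructed from the (weakly convergent) inversion formula, defines a continuous functional on the corresponding test function space.

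The main obstacle I expect is keeping the quantifier structure straight across the Roumieu (``for some $h$'') and Beurling (``for every $h$'') cases, since one direction starts from the symmetric decay of $f,\hat f$ with a fixed $h$, loses a factor of $4$ along the way, and must still land with the correct quantifier on $r$. A secondary subtlety is justifying the distributional inversion formula in part b) for the Roumieu class, which requires that the window $\phi\in\maclS_s$ be paired with the exponential growth of $V_\phi f$ in a mutually compatible way; here the Beurling--Domar condition \eqref{bdc} guarantees that all the relevant integrals converge and that the resulting functional is well defined on $\maclS_s(\rr d)$.
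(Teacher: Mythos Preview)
The paper does not actually prove this proposition: it simply states ``We omit the proof since the first part follows from \cite[Theorem 2.7]{GZ} and the second part from \cite[Proposition 2.2]{Toft18}.'' So there is nothing to compare against beyond noting that your direct argument is in the spirit of those references, while the paper defers entirely to them.

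That said, your sketch has a genuine gap in the range $\tfrac12\le s<1$ (and at $s=1$). Your ``key analytic ingredient'' $|x|^{1/s}\le |y|^{1/s}+|y-x|^{1/s}$ is valid only for $s\ge 1$; for $s<1$ the map $t\mapsto t^{1/s}$ is convex, and the correct replacement is
\[
|x|^{1/s}\le 2^{\,1/s-1}\bigl(|y|^{1/s}+|y-x|^{1/s}\bigr),
\]
which still suffices (you lose a fixed multiplicative constant in the exponent, harmless for both the Roumieu and the Beurling quantifiers), but you should say so. Relatedly, your appeal to the moderate-weight inequality \eqref{eq:2Next} and to the Beurling--Domar condition \eqref{bdc} is misplaced here: for $s\le 1$ the weight $e^{h|\cdot|^{1/s}}$ is \emph{not} in $\mascP_E(\rr d)$ (it violates \eqref{Eq:CondSubWeights}) and does not satisfy \eqref{bdc}, so those paper lemmas do not cover the cases you need. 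The pointwise inequalities you want are elementary and do not require that machinery---just state and use the convexity bound above directly.

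A smaller issue: you invoke Theorem~\ref{esemes}, but in this paper that theorem is stated only for $s>1$ (precisely because the coorbit framework there needs subexponential weights). What you actually use is the equivalence a)$\Leftrightarrow$b), which the paper itself remarks ``holds for all $s>0$'' with a citation to \cite{CCK,GZ,KPP}; make that explicit rather than citing Theorem~\ref{esemes} wholesale.
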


We omit the proof since the first part follows from \cite[Theorem 2.7]{GZ}
and the second part from \cite[Proposition 2.2]{Toft18}.
See also \cite{CPRT10} for related results.

\par

From these investigations and by \cite[Theorem 2.3]{To11} it follows that the definition of the map
$(f,\phi)\mapsto V_{\phi} f$ from $L^2 (\rr d) \times L^2 S (\rr d)$
to $ L^2 (\rr {2d})$ is uniquely extendable to a continuous map from
$\maclS _s'(\rr d)\times \maclS_s'(\rr d)$
to $\maclS_s'(\rr {2d})$, and restricts to a continuous map
from $\maclS _s (\rr d)\times \maclS _s (\rr d)$
to $\maclS _s(\rr {2d})$.
The same conclusion holds with $\Sigma _s$ in place of
$\maclS_s$, at each place. Therefore, Definition \ref{def:koorbit} can be appropriately
modified to include ultradistributions $  f \in  \maclS_s'(\rr {d})$
(or $f \in \Sigma _s ' (\rr {d})$, and we will use such extension from now on.

\section{Main results} \label{sec2}

In this section we discuss Wilson bases expansions in the context of Gelfand-Shilov spaces and their dual spaces of tempered ultradistributions.

\begin{thm} \label{main1}
Let $ s>1$ and let there be given a Wilson basis of exponential decay $ \{ \psi_{l,n} \}_{ l \in
\mathbb{N}_{0}, n \in \mathbb{Z}}. $

\begin{itemize}
\item[a)]
If  $f \in \maclS _s (\rr d) $ ($f \in \Sigma _s (\rr d) $) then
\begin{equation} \label{expansion}
f = \sum\limits_{ l \in \mathbb{N}_{0}} \sum\limits_{n \in \mathbb{Z} } \langle f,  \psi_{l,n} \rangle \psi_{l,n},
\end{equation}
with the unconditional convergence in $ \maclS _s (\rr d) $ (in $ \Sigma _s (\rr d) $)
and
$$
 \sum\limits_{ l \in \mathbb{N}_{0}, n \in \mathbb{Z} } |c_{l,n}|^2
e^{2k(|n/2| + |l|)^{1/s}} < \infty, \;\; \mbox{ for some }
\;\; \text{(for all)} \; k \geq 0,
$$
where $ c_{l,n} = \langle f,  \psi_{l,n} \rangle, $ $l \in
\mathbb{N}_{0}$, $n \in \mathbb{Z}. $

\item[b)]
Conversely, if $ (c_{l,n})_{l \in \mathbb{N}_{0}, n \in \mathbb{Z}} $ is a (double)
sequence  such that
\begin{equation} \label{mainthm1b}
 \sum\limits_{ l \in \mathbb{N}_{0}, n \in \mathbb{Z} } |c_{l,n}|^2
e^{2k(|n/2| + |l|)^{1/s}} < \infty,
\end{equation}
 for some  (for all) $ k \geq 0$, then
there exists a function $ f \in  \maclS _s (\rr d) $ ($f \in \Sigma _s (\rr d) $) such that \eqref{expansion} holds with
$ c_{l,n} = \langle f,  \psi_{l,n} \rangle, $ $l \in \mathbb{N}_{0}$, $n \in \mathbb{Z}. $
\end{itemize}
\end{thm}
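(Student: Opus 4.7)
The plan is to reduce everything to the coorbit space characterization via Theorem \ref{esemes} and then invoke the Wilson basis expansion results of Theorem \ref{coorbitandwilson} in each coorbit space separately, finally combining the two coefficient estimates with a Cauchy--Schwarz argument.

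For part (a), suppose $f\in \maclS _s(\rr d)$ (resp.\ $f\in \Sigma _s(\rr d)$). By Theorem \ref{esemes}~c), there exists $h>0$ (resp.\ for every $h>0$) such that $f\in {\mathcal C}o Y^{h,s}(\rr d)\cap \mathscr{F}{\mathcal C}o Y^{h,s}(\rr d)$. Applying Theorem \ref{coorbitandwilson} to each of these two coorbit spaces yields the expansion \eqref{expansion} with $c_{l,n}=\langle f,\psi_{l,n}\rangle$, converging unconditionally in both coorbit spaces (and therefore in their intersection, which corresponds to one of the Banach ``steps'' of the Gelfand--Shilov topology, so convergence is in $\maclS _s$ resp.\ $\Sigma _s$). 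Simultaneously, Theorem \ref{coorbitandwilson}~b) and c) give
\[
\sum_{l,n} |c_{l,n}|^2 e^{2h|l|^{1/s}}<\infty
\qquad\text{and}\qquad
\sum_{l,n} |c_{l,n}|^2 e^{2h|n/2|^{1/s}}<\infty .
\]
The Cauchy--Schwarz inequality applied to $A_{l,n}=|c_{l,n}|^2 e^{2h|l|^{1/s}}$ and $B_{l,n}=|c_{l,n}|^2 e^{2h|n/2|^{1/s}}$ gives
\[
\sum_{l,n}|c_{l,n}|^2 e^{h|l|^{1/s}+h|n/2|^{1/s}}=\sum_{l,n}\sqrt{A_{l,n}B_{l,n}} \le \Bigl(\sum A_{l,n}\Bigr)^{1/2}\Bigl(\sum B_{l,n}\Bigr)^{1/2}<\infty .
\]
Since $s>1$ implies $1/s<1$, the map $t\mapsto t^{1/s}$ is concave on $[0,\infty)$ with value $0$ at $0$, hence $(|l|+|n/2|)^{1/s}\le |l|^{1/s}+|n/2|^{1/s}$. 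Choosing $k=h/2$ yields the required estimate $\sum |c_{l,n}|^2 e^{2k(|l|+|n/2|)^{1/s}}<\infty$ for some $k>0$ in the Roumieu case, and for every $k>0$ in the Beurling case.

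For part (b), suppose the coefficients satisfy \eqref{mainthm1b} for some (resp.\ every) $k\ge 0$. Since $(|l|+|n/2|)^{1/s}\ge \max(|l|,|n/2|)^{1/s}$, taking $h=k$ immediately gives both
\[
\sum_{l,n} |c_{l,n}|^2 e^{2h|l|^{1/s}}<\infty
\qquad\text{and}\qquad
\sum_{l,n} |c_{l,n}|^2 e^{2h|n/2|^{1/s}}<\infty.
\]
By Theorem \ref{coorbitandwilson} (applied in the form that the coefficient space is isomorphic to the coorbit space via the Wilson expansion), the series $\sum c_{l,n}\psi_{l,n}$ converges unconditionally in both ${\mathcal C}o Y^{h,s}(\rr d)$ and $\mathscr{F}{\mathcal C}o Y^{h,s}(\rr d)$ to a function $f$ lying in their intersection. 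Theorem \ref{esemes}~c) then places $f$ in $\maclS _s(\rr d)$ (resp.\ $\Sigma _s(\rr d)$). Uniqueness of the coefficients (again from Theorem \ref{coorbitandwilson}) forces $c_{l,n}=\langle f,\psi_{l,n}\rangle$.

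The main point that requires some care is the passage from unconditional convergence in the Banach coorbit spaces to unconditional convergence in the LS/FS topology of the Gelfand--Shilov space. This is not an obstacle in principle, because Theorem \ref{esemes} identifies the intersection ${\mathcal C}o Y^{h,s}\cap \mathscr{F}{\mathcal C}o Y^{h,s}$ with the Banach step that defines $\maclS _s$ (in the Roumieu case) or exhausts $\Sigma _s$ as $h$ varies (in the Beurling case); both readings fit directly into the inductive/projective limit topology. The rest is bookkeeping with the elementary inequality $(a+b)^{1/s}\le a^{1/s}+b^{1/s}$ for $s\ge 1$ and Cauchy--Schwarz, which together neatly disentangle the two separate coorbit estimates.
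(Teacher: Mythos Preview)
Your proof is correct and follows essentially the same route as the paper's own argument: reduce to the coorbit characterization via Theorem~\ref{esemes}, apply Theorem~\ref{coorbitandwilson} to obtain the two separate estimates \eqref{rastkoef1} and \eqref{rastkoef2}, and then combine them with Cauchy--Schwarz and the subadditivity $(a+b)^{1/s}\le a^{1/s}+b^{1/s}$ to reach the joint estimate with $k=h/2$; part~(b) is likewise handled by splitting the joint weight into the two separate ones and invoking Theorem~\ref{coorbitandwilson} in reverse. You are a bit more explicit than the paper about the subadditivity step and about how unconditional convergence in the coorbit Banach steps transfers to the inductive/projective limit topology, but the structure of the argument is the same.
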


\begin{proof}
a) We prove the Roumieu case, since the Beurling case is given in \cite[Theorem 5.1 a)]{PT1}. Let  $f \in \maclS _s (\rr d) $.
By Theorem \ref{esemes} c) we have that
$$
f \in  {\mathcal C}o Y^{h, s}  (\rr d)   \cap \mathscr{F}{\mathcal C}o Y^{h, s}  (\rr d)
$$
for some $h>0$. Then, from Theorem \ref{coorbitandwilson} it follows that
\eqref{rastkoef1} and \eqref{rastkoef2} hold for that constant $h>0.$
Therefore,
\begin{multline*}
\sum\limits_{ l \in \mathbb{N}_{0}, n \in \mathbb{Z} } |c_{l,n}|^2 e^{2\frac{h}{2}(|n/2| + |l|)^{1/s}} \\
\leq
\sum\limits_{ l \in \mathbb{N}_{0}, n \in \mathbb{Z} } |c_{l,n}| e^{2\frac{h}{2}|n/2|^{1/s}}
\cdot |c_{l,n}| e^{2\frac{h}{2}|l|^{1/s}} \\
\leq
(\sum\limits_{ l \in \mathbb{N}_{0}, n \in \mathbb{Z} } |c_{l,n}|^2 e^{2h|n/2|^{1/s}} )^{1/2}
\cdot (\sum\limits_{l \in \mathbb{N}_{0}, n \in \mathbb{Z} } |c_{l,n}|^2 e^{2h|l|^{1/s}} )^{1/2}
 <  \infty,
\end{multline*}
so that
$$
 \sum\limits_{ l \in \mathbb{N}_{0}, n \in \mathbb{Z} } |c_{l,n}|^2
e^{2k(|n/2| + |l|)^{1/s}} < \infty
$$
for $k = h/2.$ The unconditional convergence follows from Theorem \ref{coorbitandwilson} a).

To prove  b), we note that \eqref{mainthm1b} obviously implies
$$
 \sum\limits_{ l \in \mathbb{N}_{0}, n \in \mathbb{Z} } |c_{l,n}|^2
e^{2k|n/2| ^{1/s}} < \infty,
$$
and
$$
 \sum\limits_{ l \in \mathbb{N}_{0}, n \in \mathbb{Z} } |c_{l,n}|^2
e^{2k |l|^{1/s}} < \infty,
$$
so that
$$
f = \sum\limits_{ l \in \mathbb{N}_{0}} \sum\limits_{n \in \mathbb{Z} } c_{l,n} \psi_{l,n} \in
 {\mathcal C}o Y^{h, s}  (\rr d)   \cap \mathscr{F}{\mathcal C}o Y^{h, s}  (\rr d),
$$
and since the Wilson basis is an ONB we have that $  c_{l,n} = \langle f,  \psi_{l,n} \rangle $,
$l \in \mathbb{N}_{0}$, $ n \in \mathbb{Z}$. Now, by Theorem \ref{esemes} we conclude that $ f \in  \maclS _s (\rr d) $, and the proof is finished.
\end{proof}

For the proof of Theorem  \ref{main2} we need a simple lemma on divergent series. We note that a  similar argument is used in the proof of  \cite[Theorem 9.6-1]{Zem}. To be self-contained we provide the proof in Appendix.

\begin{lemma} \label{seriesestimate}
Let  $(a_n)_{n \in \mathbb{N}_0} $ be a zero convergent sequence of non-negative numbers such that
$$
\sum_{n \in \mathbb{N}_0} a_n = + \infty.
$$
Then there exists an increasing sequence of integers $ m_l, $ $ l \in  \mathbb{N},$ such that
$$
1 < \sum_{n = m_{l-1}} ^{m_l - 1} a_n  < 3.
$$
\end{lemma}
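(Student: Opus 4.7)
The plan is to build the sequence $(m_l)$ greedily, by accumulating just enough terms at each stage so that the block sum exceeds $1$, and then using the smallness of the tail to ensure it stays below $3$ (in fact below $2$).

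First I would handle the initial segment. Since $a_n \to 0$, there is an index $N$ such that $a_n < 1$ for every $n \geq N$. I would set $m_0 := N$ (the lemma allows this freedom, since only the \emph{existence} of an increasing sequence is claimed, and truncating a finite initial portion of $(a_n)$ does not affect divergence of the series).

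Next, assuming $m_{l-1}$ has been defined with $m_{l-1} \geq N$, I would define $m_l$ to be the smallest integer strictly greater than $m_{l-1}$ for which $\sum_{n=m_{l-1}}^{m_l - 1} a_n > 1$. Such an index exists because $\sum_{n \geq m_{l-1}} a_n = +\infty$, so partial sums eventually surpass $1$. The construction makes $(m_l)$ strictly increasing and delivers the lower bound $\sum_{n=m_{l-1}}^{m_l - 1} a_n > 1$ by definition. Since $a_{m_{l-1}} < 1$, the single term alone does not exceed $1$, which forces $m_l \geq m_{l-1} + 2$; in particular the index $m_l - 2$ still lies in the range $[m_{l-1}, m_l - 1)$. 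By the minimality of $m_l$ one then gets
\begin{equation*}
\sum_{n = m_{l-1}}^{m_l - 2} a_n \leq 1,
\end{equation*}
and adding the single extra term yields
\begin{equation*}
\sum_{n = m_{l-1}}^{m_l - 1} a_n \leq 1 + a_{m_l - 1} < 1 + 1 = 2 < 3,
\end{equation*}
which gives the desired upper bound.

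There is no real obstacle in this argument; the only point that requires a moment of care is verifying that $m_l \geq m_{l-1} + 2$, so that the minimality step produces a nontrivial partial sum to bound against $1$. This is precisely where the choice $m_0 \geq N$ (ensuring $a_n < 1$ throughout) is used. The loose constant $3$ in the statement accommodates any reader who prefers a slightly different normalization of the greedy threshold, but the argument as given actually produces strictly better bounds $1 < \sum < 2$.
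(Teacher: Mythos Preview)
Your proof is correct and follows essentially the same greedy strategy as the paper's Appendix: pick a starting index past which every $a_n<1$, then accumulate terms until the block sum crosses a fixed threshold, using $a_n<1$ to control the overshoot. Your execution is actually cleaner than the paper's (which routes through partial-sum thresholds $M+3k$ and several intermediate inequalities) and yields the sharper bound $1<\sum<2$.
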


\begin{thm} \label{main2}
Let $ s>1$ and let there be given a Wilson basis of exponential decay $ \{ \psi_{l,n} \}_{ l \in
\mathbb{N}_{0}, n \in \mathbb{Z}}. $

\begin{itemize}
\item[a)]
Every $f \in \maclS _s ' (\rr d) $ ($f \in \Sigma _s '(\rr d) $)  has a unique expansion
$$
\displaystyle f = \sum\limits_{ l \in \mathbb{N}_{0}, n
\in \mathbb{Z} } \langle f, \psi_{l,n} \rangle  \psi_{l,n}
$$
in $ \maclS _s ' (\rr d) $ (in $ \Sigma _s '(\rr d) $) and
\begin{equation} \label{fseries}
 \sum\limits_{ l \in \mathbb{N}_{0}, n \in \mathbb{Z} } |c_{l,n}|^2
  e^{-2h(|n/2| + |l|)^{1/s}} < \infty,
\end{equation}
 for every  (for some) $h \geq 0$,
where
$ c_{l,n} = \langle f,  \psi_{l,n} \rangle, $ $ l \in \mathbb{N}_{0}, n \in \mathbb{Z}.$

\item[b)] Conversely, if (\ref{fseries}) holds for some sequence $ (c_{l,n})_{l
\in \mathbb{N}_{0}, n \in \mathbb{Z}} $ and for every  (for some) $h \geq 0$,
then there exists
$ f\in \maclS _s ' (\rr d) $ ($f \in \Sigma _s '(\rr d) $) such that
$$ \displaystyle f = \sum\limits_{ l
\in \mathbb{N}_{0}, n \in \mathbb{Z} } c_{l,n}  \psi_{l,n}
$$
in  $\maclS _s ' (\rr d) $ (in $ \Sigma _s '(\rr d) $).
\end{itemize}
\end{thm}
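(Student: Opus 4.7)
The plan mirrors Theorem \ref{main1} on the dual side: control the Wilson coefficients $c_{l,n} := \langle f, \psi_{l,n}\rangle$ via the STFT of $f$, and handle the weak-star convergence of the expansion through Cauchy--Schwarz duality paired with Theorem \ref{main1}. These coefficients are well-defined because $\psi_{l,n} \in \maclS_1(\rr d) \subset \maclS_s(\rr d) \cap \Sigma_s(\rr d)$ by \eqref{opadanje}, and the representation in \eqref{wilson03} exhibits each $c_{l,n}$ with $l \neq 0$ as a linear combination of the STFT values $V_\psi f(n/2, \pm l)$, while $c_{0,n} = V_\psi f(n,0)$. Uniqueness of the expansion is immediate from the orthonormality of the Wilson basis.

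For part (a), I would invoke Proposition \ref{stftGelfand2}(b), which provides $|V_\psi f(x,\xi)| \lesssim e^{r(|x|^{1/s}+|\xi|^{1/s})}$ for every $r > 0$ (Roumieu) or for some $r > 0$ (Beurling), and combine it with the elementary bound $a^{1/s}+b^{1/s} \leq 2^{1-1/s}(a+b)^{1/s}$ valid for $s \geq 1$. Matching $r$ and $h$ appropriately --- $r$ arbitrary for each fixed $h$ in the Roumieu case, $h$ taken large relative to the fixed $r$ in the Beurling case --- yields $|c_{l,n}|^2 e^{-2h(|n/2|+|l|)^{1/s}} \lesssim e^{-\delta(|n/2|+|l|)^{1/s}}$ for some $\delta > 0$, and summability over $\mathbb{N}_0^d \times \mathbb{Z}^d$ follows because $s > 1$. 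The weak-star convergence $f_N \to f$ of the partial sums follows by pairing against an arbitrary $\phi \in \maclS_s$ (resp.\ $\Sigma_s$), expanding $\phi = \sum d_{l,n}\psi_{l,n}$ via Theorem \ref{main1}, and applying Cauchy--Schwarz to the tail $\sum_{(l,n)\notin F_N} c_{l,n}\overline{d_{l,n}}$ after splitting the weight as $e^{-h(|n/2|+|l|)^{1/s}} \cdot e^{h(|n/2|+|l|)^{1/s}}$, both factors vanishing as $F_N$ exhausts the index set. Part (b) is the mirror image: the linear map $T(\phi) := \sum c_{l,n}\overline{d_{l,n}}$ is continuous on the appropriate test-function space by the same Cauchy--Schwarz estimate, with the quantifiers on the coefficient side matching those produced by Theorem \ref{main1} on the test-function side, and orthonormality identifies the Wilson coefficients of $T$ with the prescribed $(c_{l,n})$.

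The main obstacle is the precise matching of ``for every $h$'' versus ``for some $h$'' across the Roumieu--Beurling duality, and this is where Lemma \ref{seriesestimate} enters in an alternative, self-contained derivation of the necessity in part (a) that bypasses Proposition \ref{stftGelfand2}(b). Suppose that $\sum|c_{l,n}|^2 e^{-2h_0(|n/2|+|l|)^{1/s}} = +\infty$ for some $h_0 > 0$ in the Roumieu setting. Enumerating the indices and applying Lemma \ref{seriesestimate} produces blocks $I_m$ of consecutive indices with $\sum_{I_m} |c_{l,n}|^2 e^{-2h_0(|n/2|+|l|)^{1/s}} \in (1,3)$. Setting $d_{l,n} := m^{-1} c_{l,n} e^{-2h_0(|n/2|+|l|)^{1/s}}$ for $(l,n) \in I_m$ yields $\sum|d_{l,n}|^2 e^{2h_0(|n/2|+|l|)^{1/s}} \leq 3\sum_m m^{-2} < \infty$, so by Theorem \ref{main1} the series $\phi := \sum d_{l,n}\psi_{l,n}$ converges in $\maclS_s(\rr d)$; however $\langle f,\phi\rangle = \sum_m m^{-1}\sum_{I_m}|c_{l,n}|^2 e^{-2h_0(|n/2|+|l|)^{1/s}} \geq \sum_m m^{-1} = +\infty$ contradicts the continuity of $f$ on $\maclS_s(\rr d)$. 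The Beurling case is parallel with the quantifiers reversed, so the assumed divergence cannot occur and the coefficient estimate holds in its full strength.
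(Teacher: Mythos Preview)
Your primary approach via Proposition \ref{stftGelfand2}(b) is correct and considerably more direct than the paper's. The paper deliberately avoids invoking the STFT characterization of the dual spaces and instead derives the coefficient estimate in part (a) by a two-stage contradiction argument: first it shows that the sequence $(e^{-k(|n/2|+|l|)^{1/s}} c_{l,n})_{l,n}$ is bounded for \emph{every} $k>0$ (by constructing a test function in $\maclS_s$ supported on a single unbounded subsequence of indices, so that the pairing diverges), and only then applies Lemma \ref{seriesestimate} to upgrade boundedness to square-summability. Your route sidesteps both stages by reading off $|c_{l,n}| \lesssim e^{r(|n/2|^{1/s}+|l|^{1/s})}$ directly from the STFT bound with window $\psi\in\maclS_1\subset\maclS_s$, which yields summability at once. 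What the paper's approach buys is independence from Proposition \ref{stftGelfand2}: its argument stays entirely within the Wilson-basis framework and the coorbit characterization of Theorem \ref{esemes}, which is the paper's stated goal.

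Your sketch of the ``alternative, self-contained derivation'' via Lemma \ref{seriesestimate} does reproduce the paper's second contradiction step, but it has a gap as written: Lemma \ref{seriesestimate} requires the terms $|c_{l,n}|^2 e^{-2h_0(|n/2|+|l|)^{1/s}}$ to form a \emph{zero-convergent} sequence, and once you have renounced Proposition \ref{stftGelfand2}(b) you have no a priori reason this holds. The paper handles exactly this point with its preliminary boundedness argument (for every $k$, hence in particular for some $k<h_0$, which forces the terms at $h_0$ to tend to zero). If you want the alternative to genuinely bypass the STFT characterization, you must insert that first step; otherwise the block construction cannot get started.
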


\begin{proof}
The  Beurling case can be proved by making appropriate changes if the proof of \cite[Theorem 9.6-1]{Zem}, cf. \cite{Teofthesis}.
However, since the proof for  the Roumieu case contains nontrivial modifications of Zemanian's  proof, we
provide it  here.

b)
Let \eqref{fseries} holds for some $ h \geq 0, $ and let
$ \displaystyle f = \sum\limits_{ l \in \mathbb{N}_{0}, n \in \mathbb{Z} } c_{l,n}  \psi_{l,n}.$

If $ \phi \in \maclS _s  (\rr d) $ then we have
\begin{multline*}
| \langle f, \phi \rangle | =
| \langle  \sum_{  l \in \mathbb{N}_{0}, n \in \mathbb{Z}  } c_{l,n} \psi_{l,n}, \phi \rangle |
\\
\leq
| \langle  \sum_{  l \in \mathbb{N}_{0}, n \in \mathbb{Z}} c_{l,n}
e^{-h(|\frac{n}{2}| + |l|)^{1/s}} e^{h(|\frac{n}{2}| + |l|)^{1/s}}
\psi_{l,n}, \phi \rangle |
\\
 \leq
  \sum_{ l \in \mathbb{N}_{0}, n \in \mathbb{Z}} | c_{l,n} |
  e^{-h(|\frac{n}{2}| + |l|)^{1/s}} e^{h(|\frac{n}{2}| + |l|)^{1/s}}
  | \langle \psi_{l,n}, \phi \rangle |
\\
 \leq
  \big ( \sum_{ l \in \mathbb{N}_{0}, n \in \mathbb{Z} } | c_{l,n} |^2
  e^{-2h(|\frac{n}{2}| + |l|)^{1/s}} \big )^{\frac{1}{2}}
  \big ( \sum_{ l \in \mathbb{N}_{0}, n \in \mathbb{Z} } |\langle \psi_{l,n}, \phi \rangle |^2 e^{2h(|\frac{n}{2}| + |l|)^{1/s}}
  \big )^{\frac{1}{2}}.
\end{multline*}

Since $ \phi \in \maclS _s  (\rr d) $, by Theorem \ref{main1} a) it follows that we can choose $h\geq0$ such that
$$
\sum_{ l \in \mathbb{N}_{0}, n \in \mathbb{Z} } |\langle \psi_{l,n}, \phi \rangle |^2 e^{2h(|\frac{n}{2}| + |l|)^{1/s}} <\infty,
$$
and for such choice of  $h\geq 0$, by (\ref{fseries}) it follows that
$$
\sum_{ l \in \mathbb{N}_{0}, n \in \mathbb{Z} } | c_{l,n} |^2
  e^{-2h(|\frac{n}{2}| + |l|)^{1/s}} < \infty,
$$
and we conclude that $  | \langle   f, \phi \rangle |  <  \infty, $ so that $ f\in \maclS _s ' (\rr d) $,
and b) is proved.

Before we proceed we note that (\ref{fseries})
implies that $ \displaystyle |c_{l,n}| \leq C e^{h(|\frac{n}{2}| + |l|)^{1/s}} $
for every $  h \geq 0, $ and all $ l \in \mathbb{N}_{0}$, $n \in \mathbb{Z}$, that is,
the (double indexed) sequence
$ (|c_{l,n}|  e^{-h(|\frac{n}{2}| + |l|)^{1/s}} )_{ l \in \mathbb{N}_{0}, n \in \mathbb{Z} } $ is bounded  for every $ h \geq 0.$

a)  Let $f \in \maclS _s ' (\rr d) $ and consider
$ \displaystyle  \sum\limits_{ l \in \mathbb{N}_{0}, n \in \mathbb{Z} } c_{l,n}  \psi_{l,n}$
with $ c_{l,n}=\langle f, \psi_{l,n} \rangle,$ $l \in \mathbb{N}_{0}$, $n \in \mathbb{Z} $.
If $ \phi \in \maclS _s  (\rr d) $ then by \eqref{expansion} we have
\begin{multline*}
| \langle  \sum_{  l \in \mathbb{N}_{0}, n \in \mathbb{Z}  } c_{l,n} \psi_{l,n}, \phi \rangle |
=
| \sum_{  l \in \mathbb{N}_{0}, n \in \mathbb{Z}} c_{l,n}
\langle  \psi_{l,n}, \phi \rangle |
\\
=
| \sum_{  l \in \mathbb{N}_{0}, n \in \mathbb{Z}} \langle f,
\langle \phi,  \psi_{l,n}, \rangle  \psi_{l,n} \rangle |
=
|\langle f,  \sum_{  l \in \mathbb{N}_{0}, n \in \mathbb{Z}}
\langle \phi,  \psi_{l,n}, \rangle  \psi_{l,n} \rangle |
\\
= | \langle f, \phi \rangle | < \infty,
\end{multline*}
so that the expansion is unique and
$$
\langle  \sum_{  l \in \mathbb{N}_{0}, n \in \mathbb{Z}  }\langle f, \psi_{l,n} \rangle \psi_{l,n}, \sum_{  k \in \mathbb{N}_{0}, m \in \mathbb{Z}  }
\langle \phi, \psi_{k,m} \rangle \psi_{k,m} \rangle
= \sum_{  l \in \mathbb{N}_{0}, n \in \mathbb{Z}  } \overline{a_{l,n}} c_{l,n} < \infty,
$$
where $a_{l,n} = \langle \phi, \psi_{l,n} \rangle,$  $l \in \mathbb{N}_{0}$, $n \in \mathbb{Z} $.

Next we prove that  the sequence
$ ( e^{-k(|\frac{n}{2}| + |l|)^{1/s}} c_{l,n} )_{ l \in \mathbb{N}_{0}, n
\in \mathbb{Z} } $ is bounded for every $ k> 0. $

We give the proof by contradiction:
suppose that there exists $ k_0 >0$ such that the sequence
$  ( e^{-k_0(|\frac{n}{2}| + |l|)^{1/s}} c_{l,n} )_{ l \in
\mathbb{N}{0}, n \in \mathbb{Z} }  $ is unbounded. Then there
exists a sequence of increasing (by components) indices $ (l_m, n_m)_{ m \in \mathbb{N}} $ such that
$$
e^{-k_0 ( |l_m|+ |\frac{n_m}{2}| )^{1/s}} | c_{l_m,n_m} | \geq m \;\;\; m \in \mathbb{N}.
$$
Next we consider the sequence $ (a_{l,n})$ with the following properties
\begin{enumerate}
\item $\overline{a_{l,n}} c_{l,n} = |a_{l,n} c_{l,n}| $,
\item $ \displaystyle | a_{l_m, n_m}|  = e^{-k_0 ( |l_m|+ |\frac{n_m}{2}| )^{1/s}} \cdot \frac{1}{m}$,
\item $ a_{l,n} = 0 $ when $ (l,n) \neq (l_m, n_m).$
\end{enumerate}
This gives
\begin{multline*}
 \sum_{  l \in \mathbb{N}_{0}, n \in \mathbb{Z}  } |e^{k_0(|\frac{n}{2}| + |l|)^{1/s}}  a_{l,n} |^2
\\
=\sum_{  m\in \mathbb{N} } e^{2 k_0(|\frac{n_m}{2}| + |l_m|)^{1/s}} e^{-2k_0 ( |l_m|+ |\frac{n_m}{2}| )^{1/s}} \cdot \frac{1}{m^2} =\sum_{  m\in \mathbb{N} } \frac{1}{m^2} < \infty.
\end{multline*}
By Theorem \ref{main1} b) it follows that
$ \phi = \sum_{  l \in \mathbb{N}_{0}, n \in \mathbb{Z}  }  a_{l,n}\psi_{l,n} \in  \maclS _s (\rr d) $,  so that
$\sum_{  l \in \mathbb{N}_{0}, n \in \mathbb{Z}  } \overline{a_{l,n}} c_{l,n} < \infty.$ On the other hand,
\begin{multline*}
\sum_{  l \in \mathbb{N}_{0}, n \in \mathbb{Z}  } \overline{a_{l,n}} c_{l,n} = \sum_{  l \in \mathbb{N}_{0}, n \in \mathbb{Z}  } |a_{l,n} c_{l,n}|  \\
= \sum_{  m\in \mathbb{N}_{0} }  e^{-k_0 ( |l_m|+ |\frac{n_m}{2}| )^{1/s}} \cdot \frac{1}{m} |c_{l,n}|
\geq \sum_{  m\in \mathbb{N}_{0} } 1 = \infty,
\end{multline*}
which gives the contradiction.

\par

Thus, we conclude that the sequence
$  ( e^{-k(|\frac{n}{2}| + |l|)^{1/s}} c_{l,n} )_{ l \in
\mathbb{N}_{0}, n \in \mathbb{Z} }  $ is bounded for every $k>0$.

Finally, we prove that \eqref{fseries} holds for every $ h>0$. Again we give the proof by contradiction.
Suppose that there exists $ h_0 > 0 $ such that
\begin{equation} \label{eq:divergence}
\sum_{  l \in \mathbb{N}_{0}, n \in \mathbb{Z}  } b_{l,n} =
\sum_{  l \in \mathbb{N}_{0}, n \in \mathbb{Z}  } |c_{l,n} |^2  e^{-2h_0(|\frac{n}{2}| + |l|)^{1/s}}  = \infty.
\end{equation}
Since $  ( e^{-h_0 (|\frac{n}{2}| + |l|)^{1/s}} c_{l,n} )_{ l \in
\mathbb{N}_{0}, n \in \mathbb{Z} }  $ is bounded, it follows that $ (b_{l,n}) $ is a zero convergent sequence. By
Lemma \ref{seriesestimate} it follows  that there is
an increasing sequence of  indices $ (l_m, n_m)_{ m \in \mathbb{N}} $  such that
\begin{equation} \label{eq:bounds}
1 < \sum_{j=l_{m-1}} ^{l_m -1} \sum_{k= n_{m-1}} ^{n_m - 1}  |c_{j,k} |^2  e^{-2h_0(|\frac{k}{2}| + |j|)^{1/s}}   < 3.
\end{equation}

By choosing
$$
a_{j,k}  = c_{j,k} e^{-2 h_0 (|\frac{k}{2}| + |j|)^{1/s}} \frac{1}{m}, \qquad j=l_{m-1}, \dots, l_m -1, \quad
k= n_{m-1}, \dots, n_m - 1,
$$
we obtain
\begin{multline*}
\sum_{j=l_{m-1}} ^{l_m -1} \sum_{k= n_{m-1}} ^{n_m - 1}  |a_{j,k}|^2  e^{2 h_0 (|\frac{k}{2}| + |j|)^{1/s}} \\
= \sum_{j=l_{m-1}} ^{l_m -1} \sum_{k= n_{m-1}} ^{n_m - 1}  |c_{j,k}|^2  e^{-2 h_0 (|\frac{k}{2}| + |j|)^{1/s}} \frac{1}{m^2} \
<  \frac{3}{m^2},
\end{multline*}
for every $m \in  \mathbb{N}$, where we used \eqref{eq:bounds}. Thus,
\begin{multline*}
\sum_{ l \in \mathbb{N}_{0}, n \in \mathbb{Z}}  |a_{l,n}|^2  e^{2 h_0 (|\frac{n}{2}| + |l|)^{1/s}} \\
= \sum_{j< l_{0}} \sum_{k < n_{0}}  |a_{l,n}|^2  e^{2 h_0 (|\frac{n}{2}| + |l|)^{1/s}}  +
\sum_{j\geq l_{0}} \sum_{k \geq n_{0}}  |a_{l,n}|^2  e^{2 h_0 (|\frac{n}{2}| + |l|)^{1/s}}
\\
< C + \sum_{m \in  \mathbb{N}} \frac{3}{m^2} < \infty.
\end{multline*}

By Theorem \ref{main1} b) it follows that $\sum_{  l \in \mathbb{N}_{0}, n \in \mathbb{Z}  }  a_{l,n}\psi_{l,n} \in  \maclS _s (\rr d) $,
and therefore
\begin{equation} \label{eq:finiteseries}
\sum_{  l \in \mathbb{N}_{0}, n \in \mathbb{Z}  } \overline{a_{l,n}} c_{l,n} < \infty.
\end{equation}
However, by using the left hand side inequality in \eqref{eq:bounds} we obtain
\begin{multline*}
\sum_{j=l_{m-1}} ^{l_m -1} \sum_{k= n_{m-1}} ^{n_m - 1}  |a_{j,k} c_{j,k} | \\
= \sum_{j=l_{m-1}} ^{l_m -1} \sum_{k= n_{m-1}} ^{n_m - 1}  |c_{j,k}|^2  e^{-2 h_0 (|\frac{k}{2}| + |j|)^{1/s}} \frac{1}{m}
\geq   \frac{1}{m},
\end{multline*}
for each $ m \in  \mathbb{N}$, so that
$$
\sum_{  l \in \mathbb{N}_{0}, n \in \mathbb{Z}  } \overline{a_{l,n}} c_{l,n} =
\sum_{  l \in \mathbb{N}_{0}, n \in \mathbb{Z}  } |a_{l,n} c_{l,n} | \\
\geq \sum_{  l \geq l_{0}, n  \geq n_{0} } |a_{l,n} c_{l,n} |  \geq \sum_{m \in  \mathbb{N}}   \frac{1}{m} = \infty.
$$
This is a contradiction with \eqref{eq:finiteseries}. We conclude that the assumption \eqref{eq:divergence} can not hold.
Therefore
$$
\sum_{  l \in \mathbb{N}_{0}, n \in \mathbb{Z}  } |c_{l,n} |^2  e^{-2h(|\frac{n}{2}| + |l|)^{1/s}}  < \infty
$$
for every $h>0$ which completes the proof.

\end{proof}

\section{Alternative proof via modulation spaces}\label{sec:modulation}

Modulation spaces, originally introduced by Feichtinger in \cite{F1},
are recognized as appropriate family of spaces when dealing with problems of time-frequency analysis, see
\cite{F1,Grbook,Benyi,Cordero1}, to mention just a few references. A broader family of
modulation spaces, including quasi-Banach spaces when the Lebesgue parameters $p,q$ belong to $(0,1)$
is studied in e.g. \cite{To25}.

\par

Let there be given $\phi \in \Sigma _1 (\rr d)\setminus 0$, $p,q\in [1,\infty ]$
and $\omega \in\mascP _E(\rr {2d})$. Then the
\emph{modulation space} $M^{p,q}_{\omega }(\rr d)$ consists of all
Gelfand-Shilov distributions $f \in \Sigma _1 ' (\rr d)$ such that
\begin{equation}\label{modnorm}
\nm f{M^{p,q}_{\omega }} \equiv \Big ( \int \Big ( \int |V_\phi f(x,\xi
)\omega (x,\xi )|^p\, dx\Big )^{q/p} \, d\xi \Big )^{1/q} <\infty
\end{equation}
(with the obvious changes if $p=\infty$ and/or
$q=\infty$). If $p=q$ we simply write $M^p_{\omega }$
instead of $M^{p,p}_{\omega }$, and
if $\omega =1$, then we set $M^{p,q}=M^{p,q}_{\omega }$
and $M^{p}=M^{p}_{\omega}$.

\par

If $\omega $ i $v$-moderate, then the spaces $M_{\omega}^{p,q}$ are (quasi-)Banach spaces
and different choices of  $\phi \in  M^r_{v} \setminus 0$
give rise to equivalent (quasi-)norms in \eqref{modnorm}, and so
$M_{\omega}^{p,q}$ is independent on the choice of
$\phi \in  M^r_{v}$ \cite[Proposition 2.1]{To25}.

For   $p,q\in [1,\infty )$ and $\omega \in\mascP _E(\rr {2d})$ the dual of $ M^{p,q}_{\omega}(\mathbb{R}^d)$ is
$ M^{p',q'}_{1/\omega}(\mathbb{R}^d),$ where $ \frac{1}{p} +  \frac{1}{p'} $ $ =
 \frac{1}{q} +  \frac{1}{q'} $ $ =1.$

\par

For a given weight
$\omega \in\mascP _E(\rr {2d})$ we put $\tilde  \omega $ for the (double) sequence
$\tilde  \omega (n,l)  = \omega (\frac{n}{2},l)$, $ (n,l) \in  \mathbb{Z} \times \mathbb{N}_{0} $.
By $ l^{p,q} _{\tilde  \omega} $, $p,q\in [1,\infty ]$, we denote the space of sequences $ (a_{l,n})_{(l,n) \in  \mathbb{N}_{0}
\times \mathbb{Z}  } $ for which the norm
$$
\| a_{l,n}\|_{l^{p,q} _{\tilde  \omega}} =
\big (\sum_{l=0} ^\infty ( \sum_{n\in \mathbb{Z}}  |a_{l,n}|^p \tilde  \omega (n,l)^p  )^{q/p} \big )^{1/p}
$$
is finite.

\par

The next theorem is analogous to Theorem  \ref{coorbitandwilson}.
It follows from \cite[Chapter 12.3]{Grbook} so we omit the proof.

\begin{thm} \label{modulationandwilson}
Let $p,q\in [1,\infty ]$, $\omega \in\mascP _E(\rr {2d})$,
and let there be given a Wilson basis of exponential decay $ \{ \psi_{l,n} \}_{ l \in \mathbb{N}_{0}, n \in \mathbb{Z}}. $
Then the Banach spaces $M^{p,q}_{\omega }(\rr d)$ and $ l^{p,q} _{\tilde  \omega} $ are isomorphic. An explicit isomorphism
is provided by the coefficient operator $C_\psi: M^{p,q}_{\omega }(\rr d) \rightarrow l^{p,q} _{\tilde  \omega} $ given by
\begin{equation} \label{isomorphism}
C_\psi f =   (\langle f, \psi_{l,n} \rangle )_{(l,n) \in  \mathbb{N}_{0}
\times \mathbb{Z}  }.
\end{equation}
\end{thm}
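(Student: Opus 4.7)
My plan is to follow the well-established strategy from Gr\"ochenig's book \cite[Chapter 12]{Grbook}, adapted from the polynomial weight case to the wider class $\mascP_E(\rr{2d})$ considered here. The underlying mechanism is that each Wilson basis element $\psi_{l,n}$ is, via \eqref{wilson03}, a combination of phase--space shifts $M_{\pm l}T_{n/2}\psi$, and consequently the coefficients $\langle f, \psi_{l,n}\rangle$ are, up to bounded phase factors, sums of two STFT samples $V_\psi f(n/2, \pm l)$. The lattice weight $\tilde\omega(n,l) = \omega(n/2, l)$ on the coefficient side therefore matches the phase--space weight $\omega(x,\xi)$ on the function side.

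First I would verify that the analysis operator $C_\psi$ maps $M^{p,q}_\omega(\rr d)$ boundedly into $l^{p,q}_{\tilde\omega}$. Since $\psi \in \Sigma_1(\rr d) \subset M^1_v$ for any $v$ controlling $\omega$, the identities \eqref{wilson03} and a short computation with $T_{n/2}M_l = e^{-\pi i nl}M_lT_{n/2}$ yield
\begin{equation*}
|\langle f, \psi_{l,n}\rangle| \lesssim |V_\psi f(n/2, l)| + |V_\psi f(n/2, -l)|,
\end{equation*}
for all $(l,n)\in \mathbb N_0\times \mathbb Z$. A sampling theorem for mixed--norm STFT expressions (in the spirit of \cite[Theorem 12.2.3]{Grbook}) then yields $\nm{C_\psi f}{l^{p,q}_{\tilde\omega}} \lesssim \nm{f}{M^{p,q}_\omega}$. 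Here the exponential bound \eqref{Eq:ModWeightProp} on weights in $\mascP_E$ is used to absorb the half--integer lattice shift in the moderateness inequality \eqref{eq:2}.

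Next I would prove that the synthesis operator $D_\psi:(c_{l,n}) \mapsto \sum_{l,n}c_{l,n}\psi_{l,n}$ is bounded from $l^{p,q}_{\tilde\omega}$ into $M^{p,q}_\omega$. With a window $\phi \in \Sigma_1(\rr d)$ one writes $V_\phi(D_\psi c)(x,\xi) = \sum_{l,n}c_{l,n}V_\phi\psi_{l,n}(x,\xi)$, and the cross--STFT $V_\phi\psi_{l,n}$ decays rapidly away from $(n/2, \pm l)$ by the exponential decay \eqref{decay-in-d}. A Wiener--amalgam convolution estimate converts this into the required norm bound. Composing both operators and invoking the $L^2$--reconstruction $f = \sum_{l,n}\langle f, \psi_{l,n}\rangle \psi_{l,n}$, extended from $\Sigma_1(\rr d)$ to $M^{p,q}_\omega$ by density (and by weak--$*$ arguments when $p$ or $q$ equals $\infty$), shows that $C_\psi$ is a topological isomorphism with inverse $D_\psi$.

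The principal technical hurdle is the sampling/synthesis estimate for a general subexponentially growing $\omega \in \mascP_E(\rr{2d})$ rather than the polynomial weights originally treated in \cite{FGW}. The exponential decay of $\psi$ in both time and frequency --- which distinguishes the Wilson basis of exponential decay from Meyer-- or Daubechies--type Wilson bases --- is precisely what allows the cross--Gramian $V_\phi\psi_{l,n}$ to absorb the weight $\omega(x,\xi)$, so no further regularity of $\omega$ beyond membership in $\mascP_E$ is required.
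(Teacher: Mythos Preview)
Your proposal is correct and matches the paper's approach: the paper omits the proof entirely and simply cites \cite[Chapter 12.3]{Grbook}, which is precisely the strategy you sketch (analysis and synthesis operator bounds via STFT sampling and amalgam estimates, combined with the $L^2$ reconstruction). Your added remark that the exponential decay \eqref{opadanjepsi} of $\psi$ is what permits the passage from polynomial weights in \cite{FGW,Grbook} to the class $\mascP_E$ is on point and is the only genuinely new observation needed beyond the cited reference.

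One minor correction: you write $\psi \in \Sigma_1(\rr d)$, but \eqref{opadanjepsi} only guarantees $\psi \in \maclS_1(\rr d)$ (Roumieu type), since the decay holds for \emph{some} constants $a,b>0$ rather than all. This is still enough for $\psi \in M^1_v$ whenever the submultiplicative weight $v$ grows no faster than $e^{r|\cdot|}$ with $r$ smaller than the decay constants of $\psi$, which covers the subexponential weights actually used later in the paper; for a completely general $\omega\in\mascP_E(\rr{2d})$ one should in principle check this compatibility, though the paper does not dwell on it.
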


\par

By Theorem \ref{modulationandwilson} it follows that
$$
f = \sum\limits_{ l \in \mathbb{N}_{0}, n \in \mathbb{Z} } \langle f,  \psi_{l,n} \rangle   \psi_{l,n}
$$
with the unconditional convergence in $ M^{p,q}_{\omega }(\rr d)$ if $1\leq p,q<\infty$, and weak$^*$ convergence
in $ M^\infty _{1/v} (\rr d)$ otherwise.

Gelfand-Shilov spaces and their dual spaces can be described as projective or inductive limits of modulation spaces as follows.

\begin{thm} \label{modproerties}
Let $1 \leq p,q \leq\infty$, $s>1/2$, and set
\begin{equation} \label{eq:weight}
\omega_h (x, \omega) \equiv e^{h (|x|^{1/s} + |\xi|^{1/s})}, \;\;\; h>0, \; x,\xi \in \mathbb{R}^d.
\end{equation}
Then
$$
\Sigma_s (\mathbb{R}^d)=  \bigcap _{h>  0} M _{\omega_{h}} ^{p,q} (\mathbb{R}^d),\;\;\;
(\Sigma_s)' (\mathbb{R}^d) =  \bigcup _{h>0} M _{1/\omega_{h}} ^{p,q} (\mathbb{R}^d),
$$
$$
{\maclS}_s (\mathbb{R}^d)
=  \bigcup _{h >  0} M _{\omega_{h}} ^{p,q} (\mathbb{R}^d),\;\;\;
({\maclS}_s)' (\mathbb{R}^d)
= \bigcap _{h > 0} M _{1/\omega_{h}} ^{p,q} (\mathbb{R}^d).
$$
\end{thm}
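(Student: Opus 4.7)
The plan is to reduce each of the four identities to the STFT characterizations of Proposition \ref{stftGelfand2} by showing that the intersections (resp. unions) of $M^{p,q}_{\omega _h}(\rr d)$ and of $M^{p,q}_{1/\omega _h}(\rr d)$ over $h>0$ are independent of $p,q\in[1,\infty]$. Once that independence is in place one may specialize to $p=q=\infty$, where the modulation-space norm collapses to the weighted sup-norm of $V_\phi f$, and then Proposition \ref{stftGelfand2}(a) identifies $\bigcap _{h>0}M^\infty _{\omega _h}$ with $\Sigma _s(\rr d)$ and $\bigcup _{h>0}M^\infty _{\omega _h}$ with $\maclS _s(\rr d)$, while Proposition \ref{stftGelfand2}(b) gives the matching identifications of the $1/\omega _h$ intersection and union with $\maclS _s'(\rr d)$ and $\Sigma _s'(\rr d)$ respectively.

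For the $p,q$-independence I would fix a window $\phi \in \Sigma _s(\rr d)\setminus 0$, which by Proposition \ref{stftGelfand2}(a) satisfies $|V_\phi \phi (x,\xi )|\lesssim e^{-k(|x|^{1/s}+|\xi |^{1/s})}$ for every $k>0$. The weights $\omega _h$ and $1/\omega _h$ are moderate in the sense of Subsection \ref{subsec1.1}, so \eqref{eq:2Next} supplies a sub-exponentially growing submultiplicative majorant $v$, which is comfortably absorbed by the decay of $V_\phi \phi$. Combining the reproducing bound $|V_\phi f|\le \|\phi \|^{-2}|V_\phi f|*|V_\phi \phi |$ with this moderateness yields pointwise estimates of the form
\begin{equation*}
|V_\phi f(z)|\,\omega _h(z)\lesssim \bigl(|V_\phi f|\,\omega _h\bigr)*\bigl(|V_\phi \phi |\,v\bigr)(z),\qquad z\in \rr{2d},
\end{equation*}
and the analogous estimate with $\omega _h$ replaced by $1/\omega _h$ on the left, while the convolution kernel $|V_\phi \phi |\,v$ remains unchanged and lies in every $L^{p',q'}(\rr{2d})$. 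The mixed-norm Young inequality then delivers the embeddings $M^{p,q}_{\omega _h}\hookrightarrow M^\infty _{\omega _h}$ and $M^{p,q}_{1/\omega _h}\hookrightarrow M^\infty _{1/\omega _h}$ for every $p,q\in[1,\infty]$.

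The reverse embeddings are elementary: for any $\epsilon \in (0,h)$ and any $p,q\in[1,\infty]$ one has $M^\infty _{\omega _h}\hookrightarrow M^{p,q}_{\omega _{h-\epsilon }}$ and $M^\infty _{1/\omega _h}\hookrightarrow M^{p,q}_{1/\omega _{h+\epsilon }}$, since the surplus factor $1/\omega _\epsilon $ belongs to every $L^{p,q}(\rr{2d})$. Chaining the two families of embeddings yields
\begin{equation*}
\bigcap _{h>0}M^{p,q}_{\omega _h}=\bigcap _{h>0}M^\infty _{\omega _h},\qquad \bigcup _{h>0}M^{p,q}_{\omega _h}=\bigcup _{h>0}M^\infty _{\omega _h},
\end{equation*}
together with the same equalities for $1/\omega _h$, all of them independent of $p,q$. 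The four identities of the theorem then follow at once from Proposition \ref{stftGelfand2}.

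The main technical step, and therefore the most delicate part of the argument, is the weighted pointwise convolution bound together with the accompanying mixed-norm Young inequality, which must cover both $\omega _h$ and $1/\omega _h$ uniformly. Both are by now standard tools of modulation-space theory (see \cite{Grbook, To11, To25}), and the sub-exponential weights $\omega _h$ lie well within their scope, so the proof is largely a matter of citing those estimates and carefully verifying the $p,q$-chaining indicated above.
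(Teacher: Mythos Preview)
Your argument is correct and follows the same overall strategy as the paper's: establish that the intersections/unions over $h$ are independent of $p,q$, then anchor at one convenient choice of exponents and invoke the appropriate STFT characterization. The difference is in the anchor and the level of detail. The paper pins the argument at $p=q=2$, where the modulation spaces coincide with the coorbit spaces $\mathcal{C}oY^{h,s}$ and $\mathscr{F}\mathcal{C}oY^{h,s}$, and then appeals to Theorem~\ref{esemes}; for general $p,q$ it simply cites \cite{To11,T3} and the standard embedding machinery without spelling it out. You instead anchor at $p=q=\infty$, which matches the sup-norm formulation of Proposition~\ref{stftGelfand2} directly, and you actually carry out the $p,q$-chaining via the reproducing convolution bound and mixed-norm Young inequality. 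Your route is more self-contained and avoids the coorbit-space detour; the paper's route ties the result back into the framework used in Section~\ref{sec2}, at the cost of leaving the general-$p,q$ step to external references. Both are legitimate, and the technical content (moderateness of $\omega_h$, decay of $V_\phi\phi$ for $\phi\in\Sigma_s$, the $\epsilon$-shift in the weight index) is handled correctly in your outline.
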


\begin{proof}
The proof is  well known, see e.g. \cite[Theorem 3.9]{To11} and  \cite{T3}.

However, we may give a simple independent proof based on Theorem \ref{esemes} when $p=q=2$. Namely, if we put
$ \displaystyle \omega_1 (x) = e^{h|x|^{1/s}} $ and $ \displaystyle \omega_2
(\xi) = e^{h|\xi|^{1/s}}, $ $ x,\xi \in \rr d, $  $ s>1$, and $ h \geq 0 $,
then by definition we have
$$
M^{2} _{1\otimes \omega_2}  (\mathbb{R}^d)=
{\mathcal C}o Y^{h, s}    (\rr d) \quad \text{ and } \quad
M^{2} _{\omega_1 \otimes 1}  (\mathbb{R}^d)=
\mathscr{F} {\mathcal C}o Y^{h, s}    (\rr d),
$$
see also  \cite{PT1}, and the claim follows directly from Theorem \ref{esemes}. The same conclusion for general
$1 \leq p,q \leq\infty$ holds from embedding properties of modulation spaces and certain equivalence properties of norms for Lebesgue spaces. We omit details, and refer the reader to e.g. \cite{T3}.
\end{proof}

As noted by Gr\"ochenig, the isomorphism Theorem \ref{modulationandwilson} can  be formulated in different mathematical language.
For example, in combination with Theorem \ref{modproerties} we conclude that there exists a {\em tame isomorphism} between
FS spaces $\Sigma_s (\mathbb{R}^d)$ and
$ \cap_{h > 0} l^{p,q} _{\tilde \omega_h} $ and  between LS spaces $ {\maclS}_s (\mathbb{R}^d)$ and  $ \cup_{h > 0} l^{p,q} _{\tilde \omega_h} $. We refer to \cite{langen} for  the precise definition of tame isomorphisms and related
considerations in the context of Hermite functions expansions instead of Wilson bases.

Now we can present an alternative proof of our main results.

\begin{proof} {\em (alternative proof of Theorems \ref{main1} and \ref{main2})}
We give the proof for the Roumieu case $ {\maclS}_s (\mathbb{R}^d)$
and $ ({\maclS}_s)' (\mathbb{R}^d)$. The Beurling case can be proved by using similar arguments.

Let $ f \in  {\maclS}_s (\mathbb{R}^d)$. By Theorem \ref{modproerties} it follows that there exists  $h>0$ such that
$ f \in  M _{\omega_{h}} ^{2} (\mathbb{R}^d)$, where $\omega_{h}$ is given by \eqref{eq:weight}.
Now, Theorem \ref{modulationandwilson} implies that
$$
f = \sum\limits_{ l \in \mathbb{N}_{0}, n \in \mathbb{Z} } c_{l,n}    \psi_{l,n}, \quad \text{where} \quad
c_{l,n} = \langle f, \overline{ \psi_{l,n}} \rangle,
$$
and
$ c_{l,n} \in l^{2,2} _{\tilde  \omega _{h}},$  i.e.
$$
 \sum\limits_{ l \in \mathbb{N}_{0}, n \in \mathbb{Z} } |c_{l,n}|^2
e^{2h(|n/2| + |l|)^{1/s}} < \infty, \;\; \mbox{ for some }
\;\; h \geq 0,
$$
which proves Theorem  \ref{main1} a).

The converse part follows from the fact that under the assumption of Theorem
\ref{main1} b) we have
$$
 \sum\limits_{ l \in \mathbb{N}_{0}, n \in \mathbb{Z} } c_{l,n}    \psi_{l,n} \in M^{p,q} _{\omega_h}
$$
for some $h>0$. Therefore, by Theorem \ref{modproerties} it follows that the sum represents a unique element $f \in   {\maclS}_s (\mathbb{R}^d)$, and since the Wilson basis is an ONB it follows that $c_{l,n} = \langle f, \overline{ \psi_{l,n}} \rangle,$
and we are done.

Theorem \ref{main2} follows by duality.
\end{proof}

\begin{rem} We note that the proof of  Theorems \ref{main1} and \ref{main2} given here above does not use any reference to the coorbit space theory.
It relies on the representation of modulation spaces by means of Wilson bases, which follows from the relation between Wilson bases and Gabor frames, cf. \cite{Grbook}. Another ingredient is Theorem \ref{modproerties} which can be proved without the coorbit space theory, see \cite[Theorem 3.9]{To11}.

On the other hand, the proof presented in Section \ref{sec2} is based on direct estimates, and
does not rely on other results, apart from checking whether the construction of Wilson bases fits well
to the general theory of coorbit spaces, which is done in \cite{FGW, PT1}.

We note that in the background of both proofs are decay properties of the STFT and the exponential decay property of the considered Wilson bases. Therefore, the techniques from the present paper can be modified to include other time-frequency representations and
also more general (for example anisotropic) Gelfand-Shilov type spaces, cf. \cite{Te3}.
\end{rem}

\appendix \setcounter{secnumdepth}{0} \section{Appendix: Proof of Lemma \ref{seriesestimate}}\label{appendix}


Since $\sum_{n=1} ^\infty a_n = \infty$, by a small abuse of notation we consider the subsequence so that $ a_n > 0$, $ \forall n \in
\mathbb{N}$.  Consider the partial sum
$ s_{m_0} = \sum_{n=1} ^{m_0}  a_n  = M$, where $ m_0$ is chosen such that
$ a_n < 1 $ for all $ n \geq m_0 - 1.$
Then we have:
$$
M-1 \leq s_{m_0 -1 } < M \leq  s_{m_0} < M+1,
$$
$$
s_{m_0} -  s_{m_0 -1 } = a_{m_0} < 1 \qquad
\text{ and } \qquad
s_{m_0 -1} -  s_{m_0 -2 } = a_{m_0 -1} < 1.
$$
Moreover, $ s_{m_0 + 1} < M+2 $.

Now choose $m_1 \in \mathbb{N} $ as the minimal index such that
$$
s_{m_1} \geq M + 3.
$$
Thus we have $ M+3 >  s_{m_1 - 1} \geq  M +2 $, and $m_1 -1 > m_0$
Moreover, $ s_{m_1} = s_{m_1 - 1} + a_{m_1} < M + 4.$

Therefore we have the following situation:
$$
M \leq s_{m_0} < M+1 < M+2 \leq  s_{m_1 - 1} \leq  M+ 3< s_{m_1} <M +4,
$$
wherefrom
$$
1 < s_{m_1 - 1} - s_{m_0} < 3.
$$

\par

We continue as follows: choose $m_2 \in \mathbb{N}$ such that
$ m_2 - 1 > m_1 $ and
$$
M+3 \leq s_{m_1} < M +4 < M+5 \leq   s_{m_2 - 1} \leq  M+ 6 \leq  s_{m_2} <M + 7.
$$
This gives $ 1 < s_{m_2 - 1} - s_{m_1} < 3.$

By choosing $ m_l $, $ l \geq 3 $, in an analogous way, we obtain an increasing sequence of integers such that
$$
1 < \sum_{n= m_{l-1}} ^{m_l -1} a_n < 3,
$$
which proves the claim.

The same arguments show that for any given
$\varepsilon > 0$ there exists an increasing sequence of integers $ m_l, $ $ l \in  \mathbb{N},$ such that
$$
\frac{\varepsilon}{3} < \sum_{n = m_{l-1}} ^{m_l - 1} a_n  < \varepsilon.
$$

\section*{Acknowledgements}

The work is partially supported  by projects {\em "Localization in Phase space: theoretical, numerical and practical aspects"} No. 19.032/961--103/19 MNRVOID Republic of Srpska, TIFREFUS Project DS 15, and MPNTR of Serbia Grant No. 451--03--9/2021--14/200125.

\par


\end{document}